\theoremstyle{plain}
\newtheorem{thm}{Theorem}[section]
\newtheorem{lemma}[thm]{Lemma}
\newtheorem{prop}[thm]{Proposition}
\newtheorem{rhp}{Riemann-Hilbert Problem}
\newtheorem{dbarproblem}{$\dbar$ Problem}
\newcommand{\dbar}{{\overline{\partial}}}
\DeclareMathOperator{\im}{Im}
\DeclareMathOperator{\re}{Re}
\newcommand{\ii}{\mathrm{i}}
\newcommand{\ee}{\mathrm{e}}
\newcommand{\dd}{\mathrm{d}}
\begin{document}
\title{Dispersive Asymptotics for Linear and Integrable Equations by the $\bar{\partial}$ Steepest Descent Method}
\author{Momar Dieng}
\address{Department of Mathematics, University of Arizona
}%
\author{Kenneth D. T.-R. McLaughlin}%
\address{Deptartment of Mathematics, University of Arizona\footnote{Current affiliation:  Department of Mathematics, Colorado State University.  Email:  \texttt{kenmcl@rams.colostate.edu}}}%
\author{Peter D. Miller}%
\address{Department of Mathematics, University of Michigan}
\email{millerpd@umich.edu}

\thanks{The first two authors were supported in part by NSF
grants DMS-0451495, DMS-0800979, and the second author was supported by NSF Grant DMS-1733967.  The third author was supported in part by NSF grant DMS-1812625.}

\begin{abstract}
We present a new and relatively elementary method for studying the solution of the initial-value problem for dispersive linear and integrable equations in the large-$t$ limit, based on a generalization of steepest descent techniques for Riemann-Hilbert problems to the setting of $\dbar$-problems.  Expanding upon prior work \cite{DiengM08} of the first two authors, we develop the method in detail for the linear and defocusing nonlinear Schr\"odinger equations, and show how in the case of the latter it gives sharper asymptotics than previously known under essentially minimal regularity assumptions on initial data.
\end{abstract}

\maketitle

\section{Introduction}

The long time behavior of solutions $q(x,t)$ of the Cauchy initial-value problem for the defocusing nonlinear Schr\"{o}dinger (NLS) equation
\begin{equation}
\label{eq:NLSEQ}
\ii \frac{\partial q}{\partial t} + \frac{\partial^2 q}{\partial x^2} - 2 |q|^2 q = 0,
\end{equation}
with initial data decaying as for large $x$:
\begin{equation}
q(x,0)=q_0(x)\to 0,\quad |x|\to\infty,
\label{eq:IC}
\end{equation}
has been studied extensively, under various assumptions on the smoothness and decay properties of the initial data $q_{0}$ \cite{SegurA76,Zakh1, Its81,Deif3, Deif4, Deif7, Deif5}.  The asymptotic behavior takes the following form: as $t \to +\infty$, one has
\begin{equation}
\label{eq:AsForm}
q(x,t) = t^{-1/2}\alpha(z_0)\ee^{\ii x^2/(4t) - \ii\nu(z_0)\ln(8t)} + \mathcal{E}(x,t),
\end{equation}
where $\mathcal{E}(x,t)$ is an error term and for $z\in\mathbb{R}$, $\nu(z)$ and $\alpha(z)$ are defined by
\begin{equation}
\nu(z) := -\frac{1}{2\pi}\ln(1-|r(z)|^2),\quad |\alpha(z)|^2=\frac{1}{2}\nu(z),
\label{eq:nu-define}
\end{equation}
and
\begin{equation}
\arg(\alpha(z))=\frac{1}{\pi}\int_{-\infty}^{z}\ln(z-s)\,\dd \ln(1-|r(s)|^2)+\frac{\pi}{4} + \arg(\Gamma(\ii\nu(z)))-\arg(r(z)).
\label{eq:argalpha-define}
\end{equation}
Here $z_{0} = - x / (4t)$, $\Gamma$ is the gamma function, and $r(z)$ is the so-called reflection coefficient associated to the initial data $q_{0}$.  The connection between the initial data $q_{0}(x)$ and the reflection coefficient $r(z)$ is achieved through the spectral theory of the associated self-adjoint Zakharov-Shabat differential operator
\begin{equation}
\mathcal{L}:=
\nonumber
\ii  \sigma_3\frac{\dd}{\dd x} + \mathbf{Q}(x),\quad\sigma_3:=\begin{pmatrix}
1 & 0 \\
0 & -1 
\end{pmatrix},\quad\mathbf{Q}(x):=\begin{pmatrix}
0 & - \ii q_0(x) \\
\ii \overline{q_0(x)} & 0 
\end{pmatrix},
\label{eq:ZS}
\end{equation}
acting in $L^2(\mathbb{R};\mathbb{C}^2)$ as described, for example, in \cite{Deif4}.  

The modulus $|\alpha(z_0)|$ of the complex amplitude $\alpha(z_0)$ as written in \eqref{eq:nu-define} was first obtained by Segur and Ablowitz \cite{SegurA76} from trace formul\ae\ under the assumption that $q(x,t)$ has the form \eqref{eq:AsForm} where $\mathcal{E}(x,t)$ is small for large $t$.  Zakharov and Manakov \cite{Zakh1} took the form \eqref{eq:AsForm} as an ansatz to motivate a kind of WKB analysis of the reflection coefficient $r(z)$ and as a consequence were able to also calculate the phase of $\alpha(z_0)$, obtaining for the first time the phase as written in \eqref{eq:argalpha-define}.  Its \cite{Its81} was the first to observe the key role played in the large-time behavior of $q(x,t)$ by an ``isomondromy'' problem for parabolic cylinder functions; this problem has been an essential ingredient in all subsequent studies of the large-$t$ limit and as we shall see it is a non-commutative analogue of the Gaussian integral that produces the familiar factors of $\sqrt{2\pi}$ in the stationary phase approximation of integrals.  The first time that the form \eqref{eq:AsForm} itself was rigorously deduced from first principles (rather than assumed) and proven to be accurate for large $t$ (incidentally reproducing the formul\ae\ \eqref{eq:nu-define}--\eqref{eq:argalpha-define} in an ansatz-free fashion) was in the work of Deift and Zhou \cite{Deif3} (see \cite{Deif4} for a pedagogic description) who brought the recently introduced nonlinear steepest descent method \cite{Deif6} to bear on this problem.  Indeed, under the assumption of  high orders of smoothness and decay on the initial data $q_0$, the authors of \cite{Deif3} proved that $\mathcal{E}(x,t)$ satisfies
\begin{equation}
 \sup_{x\in\mathbb{R}}|\mathcal{E}(x,t)| = \mathcal{O} \left( \frac{\ln(t)}{t} \right),\quad t\to +\infty.
 \label{eq:DZ-estimate-1}
\end{equation}

It is reasonable to expect that any estimate of the error term $\mathcal{E}(x,t)$ would depend on the smoothness and decay assumptions made on $q_{0}$, and so it is natural to ask what happens to the estimate \eqref{eq:DZ-estimate-1} if the assumptions on $q_0$ are weakened. Early in this millennium, Deift and Zhou developed some new tools for the analysis of Riemann-Hilbert problems, originally aimed at studying the long time behavior of perturbations of the NLS equation \cite{Deif8}.  Their methods allowed them to establish long time asymptotics for the Cauchy problem \eqref{eq:NLSEQ}--\eqref{eq:IC} with essentially minimal assumptions on the initial data \cite{Deif5}.  Indeed, they assumed  the initial data $q_0$ to lie in the weighted Sobolev space 
\begin{equation}
H^{1,1}(\mathbb{R}):=\left\{f\in L^2(\mathbb R):\ xf,f'\in L^2(\mathbb R)\right\}.
\label{eq:H11-def}
\end{equation}
It is well known that if $q_{0} \in H^{1,1}(\mathbb{R})$, then the associated reflection coefficient\footnote{Since $q_0\in H^{1,1}(\mathbb{R})$ implies that $(1+|x|)q_0(x)$ is square-integrable, it follows by Cauchy-Schwarz that $H^{1,1}(\mathbb{R})\subset L^1(\mathbb{R})$, which in turn implies that the reflection coefficient $r(z)$ is well-defined for each $z\in\mathbb{R}$.} satisfies $r \in H^{1,1}_{1}(\mathbb{R})$, where
\begin{equation}
H^{1,1}_1(\mathbb{R}):=\left\{f\in H^{1,1}(\mathbb{R}):\  \sup_{z\in\mathbb{R}}|f(z)|<1\right\},
\label{eq:H111-def}
\end{equation}
and more generally the spectral transform $\mathcal{R}$ associated with the Zakharov-Shabat operator $\mathcal{L}$ \eqref{eq:ZS} is a map $\mathcal{R}: H^{1,1}(\mathbb{R}) \to H^{1,1}_{1}(\mathbb{R})$, $q_{0} \mapsto r=\mathcal{R}q_0$ that is a bi-Lipschitz bijection \cite{Zhou1}.  
The result of \cite{Deif5} is then that the Cauchy problem \eqref{eq:NLSEQ}--\eqref{eq:IC} for $q_0\in H^{1,1}(\mathbb{R})$ has a unique weak solution for which  \eqref{eq:AsForm} holds with an error term $\mathcal{E}\left(x,t \right)$  that satisfies, for any fixed $\kappa$ in the indicated range,
\begin{equation}
 \sup_{x\in\mathbb{R}}|\mathcal{E}(x,t)|  = \mathcal{O}\left( t^{- \left( \frac{1}{2} + \kappa \right) } \right),\quad t\to +\infty,\quad 0 < \kappa < \frac{1}{4}. 
\label{eq:DZ-estimate-2}
\end{equation}

Subsequently, McLaughlin and Miller \cite{McLa1,Mcla2} developed a method for the asymptotic analysis of Riemann-Hilbert problems in which jumps across contours are ``smeared out'' over a two-dimensional region in the complex plane, resulting in an equivalent $\dbar$ problem that is more easily analyzed.  In this paper we adapt and extend this method to the Riemann-Hilbert problem of inverse-scattering associated to the Cauchy problem \eqref{eq:NLSEQ}--\eqref{eq:IC}.  The main point of our work is this:  by using the $\dbar$ approach, we avoid all delicate estimates involving Cauchy projection operators in $L^{p}$ spaces (which are central to the work in \cite{Deif5}).  Instead it is only necessary to estimate certain double integrals, an exercise involving nothing more than calculus.  Remarkably, this elementary approach also sharpens the result obtained in \cite{Deif5}.  Our result is as follows.

\begin{thm}
The Cauchy problem \eqref{eq:NLSEQ}--\eqref{eq:IC} with initial data $q_0$ in the weighted Sobolev space $H^{1,1}(\mathbb{R})$ defined by \eqref{eq:H11-def} has a unique weak solution having the form \eqref{eq:AsForm}--\eqref{eq:argalpha-define} in which $r(z)$ is the reflection coefficient associated with $q_0$ and where the error term satisfies
\begin{equation}
\sup_{x\in\mathbb{R}}|\mathcal{E}(x,t)|=\mathcal{O}\left(t^{-\tfrac{3}{4}}\right),\quad t\to +\infty.
\label{eq:improved-estimate}
\end{equation}
\label{mainresult}\end{thm}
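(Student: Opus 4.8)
The plan is to prove Theorem~\ref{mainresult} by implementing the $\dbar$ steepest descent method on the Riemann--Hilbert problem of inverse scattering for \eqref{eq:NLSEQ}--\eqref{eq:IC}. First I would set up the $2\times 2$ matrix Riemann--Hilbert problem $\mathbf{M}(z;x,t)$ whose jump across $\mathbb{R}$ is built from the reflection coefficient $r(z)$ and the oscillatory phase $\ee^{2\ii t\theta(z)}$, with $\theta(z) = 2z z_0 + z^2$ (so the stationary phase point sits at $z=z_0=-x/(4t)$), and from which $q(x,t)$ is extracted via the standard large-$z$ expansion $q(x,t) = 2\ii\lim_{z\to\infty}z\mathbf{M}_{12}(z;x,t)$. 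The opening moves are classical: perform the lower/upper triangular factorization of the jump matrix, introduce the scalar function $\delta(z)$ solving the relevant scalar RHP (which produces the $\nu(z_0)$ and $\arg\alpha(z_0)$ data in \eqref{eq:nu-define}--\eqref{eq:argalpha-define}), and conjugate $\mathbf{M}$ by $\delta^{\sigma_3}$ and by the $g$-function/triangular deformation so that the factors are supported on the four rays emanating from $z_0$ along which $\re(\ii\theta)$ has a definite sign.

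The crucial departure from \cite{Deif3,Deif5} comes next: rather than deforming onto these rays (which would reintroduce Cauchy-operator estimates), I would extend the triangular factors off the real line into the four sectors using non-analytic (but $H^{1,1}$-compatible) interpolants of $r$ and $\bar r$ built from their values on $\mathbb{R}$ and their derivatives, so that $\mathbf{M}$ is replaced by a mixed RHP--$\dbar$ problem: it has no jump except across the rays, and carries a $\dbar$-source $\dbar\mathbf{M} = \mathbf{M}\mathbf{W}$ where $\mathbf{W}$ is supported in the sectors and controlled by $|\dbar R_j|$. The local parametrix near $z=z_0$ is the exact parabolic-cylinder-function solution (Its's ``isomonodromy'' model, as the excerpt flags), which contributes the leading term $t^{-1/2}\alpha(z_0)\ee^{\cdots}$ with its $\sqrt{2\pi}$-type Gamma-function factors; away from $z_0$ the outer parametrix is the identity. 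Writing $\mathbf{M} = \mathbf{E}\,\mathbf{M}^{\mathrm{loc}}\,\mathbf{M}^{\dbar}$ and peeling off the explicit pieces, the error $\mathbf{E}$ solves a pure $\dbar$ problem on the whole plane whose source is the conjugated $\mathbf{W}$ together with the mismatch of the local parametrix on a small circle of radius $\sim t^{-1/2}$.

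The heart of the matter is then the estimate of $\mathbf{E}$. Converting $\dbar\mathbf{E} = \mathbf{E}\,\widetilde{\mathbf{W}}$ into the integral equation $\mathbf{E} = \mathbf{I} + \frac{1}{\pi}\iint \frac{\mathbf{E}\widetilde{\mathbf{W}}}{\zeta - z}\,\dd A(\zeta)$, one shows the solid Cauchy operator with kernel $\widetilde{\mathbf{W}}$ is a contraction on $L^\infty$ for large $t$, and more importantly one extracts the large-$z$ behavior $\mathbf{E}(z) = \mathbf{I} + z^{-1}\mathbf{E}_1 + \cdots$ with $\mathbf{E}_1 = -\frac{1}{\pi}\iint \mathbf{E}(\zeta)\widetilde{\mathbf{W}}(\zeta)\,\dd A(\zeta)$, whose size feeds directly into the error $\mathcal{E}(x,t)$ in \eqref{eq:AsForm}. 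Here the point is that $\widetilde{\mathbf{W}}$ is genuinely two-dimensional and $\ee^{2\ii t\theta}$ decays off $\mathbb{R}$; the resulting double integrals, after splitting into the region near $z_0$ (radius $t^{-1/2}$, where $\|\dbar R_j\|$ is bounded) and the region away from $z_0$ (where the exponential gain dominates), are estimated by elementary calculus — Cauchy--Schwarz in one variable against the $L^2$ norms of $r, r'$ coming from $r\in H^{1,1}(\mathbb{R})$, and a direct bound in the other. Carrying out the two-parameter bookkeeping carefully, one finds that each contribution is $\mathcal{O}(t^{-3/4})$, uniformly in $x\in\mathbb{R}$; the $t^{-1/2}$ local-parametrix mismatch integrated against the $t^{-1/2}$-radius circle also gives $\mathcal{O}(t^{-3/4})$, and these combine to yield \eqref{eq:improved-estimate}. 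The main obstacle, and the place where the minimal-regularity hypothesis $q_0\in H^{1,1}(\mathbb{R})$ is used most delicately, is constructing the off-line extensions $R_j$ so that simultaneously (i) they match $r$ and $\bar r$ on the rays, (ii) $\dbar R_j$ is controlled by the $L^2$ data of $r'$ in a way that makes the double integrals converge, and (iii) no extra smoothness of $r$ is required; getting the exponent $-3/4$ rather than $-1/2-\kappa$ hinges on squeezing every last drop out of the interplay between the $t^{-1/2}$ scale of the stationary phase region and the Cauchy--Schwarz losses. Uniqueness of the weak solution follows, as in \cite{Deif5}, from the bi-Lipschitz bijectivity of the spectral transform $\mathcal{R}:H^{1,1}(\mathbb{R})\to H^{1,1}_1(\mathbb{R})$ together with the reconstruction formula, so the bulk of the work is the asymptotic estimate just described.
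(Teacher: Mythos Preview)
Your outline captures the broad strokes of the $\dbar$ steepest descent method, but it diverges from the paper's argument at a structurally important point, and one of your claims does not hold as stated.

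In the paper there is \emph{no} local/outer parametrix split and hence \emph{no} circle mismatch. The non-analytic extensions $E_j$ are built with the $\cos(2\arg(z-z_0))$ interpolant precisely so that on the diagonal rays $\arg(z-z_0)\in\{\pm\tfrac{\pi}{4},\pm\tfrac{3\pi}{4}\}$ they collapse to the \emph{constant} values $\pm|r(z_0)|$ and $\pm|r(z_0)|/(1-|r(z_0)|^2)$. After opening the lenses, the jumps of $\mathbf{O}$ across those diagonals are therefore \emph{exactly} the jumps of the parabolic-cylinder model $\mathbf{P}(\zeta;|r(z_0)|)$ in the rescaled variable $\zeta=2t^{1/2}(z-z_0)$, everywhere along the rays, not just near $z_0$. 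The parametrix $\mathbf{P}$ is then applied \emph{globally}, and $\mathbf{E}=(2t^{1/2})^{\ii\nu(z_0)\sigma_3}\mathbf{O}\,\mathbf{P}^{-1}$ is continuous across every ray: it solves a genuinely pure $\dbar$ problem with no residual jump contour. The leading term $\tfrac{1}{2}t^{-1/2}\mathbf{P}_1$ enters through the explicit large-$\zeta$ expansion of $\mathbf{P}$, and the error $\mathcal{E}(x,t)$ is carried entirely by $\mathbf{E}_1=\pi^{-1}\iint\mathbf{E}\mathbf{W}\,\dd A$.

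Your version, by contrast, keeps an identity outer parametrix and a local one in a disk, leaving a circle mismatch. The assertion that this mismatch ``on a small circle of radius $\sim t^{-1/2}$'' contributes $\mathcal{O}(t^{-3/4})$ cannot be right: at that radius $|\zeta|\sim 1$, so the parabolic-cylinder solution is $O(1)$ away from its large-$\zeta$ asymptotics and the jump is not small at all. With a fixed-radius circle the mismatch is $\mathbb{I}+O(t^{-1/2})$, but then its leading contribution to $\mathbf{E}_1$ is $\tfrac{1}{2}t^{-1/2}\mathbf{P}_1$ itself (the main term, not an error), and separating the next correction from the $\dbar$ piece requires bookkeeping you have not described. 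The paper's device of making the extensions constant on the diagonals is exactly what removes this entire layer: no circle, no mismatch, and the $t^{-3/4}$ emerges cleanly from four explicit double integrals estimated by Cauchy--Schwarz and H\"older, with no near/far splitting relative to $z_0$ needed.
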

The main features of this result are as follows.
\begin{itemize}
\item The error estimate is an improvement over the one reported in \cite{Deif5}, \textit{i.e.}, we prove that the endpoint case $\kappa=\tfrac{1}{4}$ holds in \eqref{eq:DZ-estimate-2}.  Our methods also suggest that the improved estimate \eqref{eq:improved-estimate} on the error is sharp.
\item As with the result \eqref{eq:DZ-estimate-2} obtained in \cite{Deif5}, the improved estimate \eqref{eq:improved-estimate} only requires the condition $r \in H^{1,0}_{1}(\mathbb{R})$, \textit{i.e.}, it is not necessary that $z r(z) \in L^{2}(\mathbb{R})$, but only that $r$ lies in the classical Sobolev space $H^1(\mathbb{R})$ and satisfies $|r(z)|\le\rho$ for some $\rho<1$.  Dropping the weighted $L^2$ condition on $r$ corresponds to admitting rougher initial data $q_0$.  For such data, the solution of the Cauchy problem is of a weaker nature, as discussed at the end of \cite{Deif5}.
\item The new $\dbar$ method which is used to derive the estimate \eqref{eq:improved-estimate} affords a considerably less technical proof than previous results.
\item The method used to establish the estimate \eqref{eq:improved-estimate} is readily extended to derive a more detailed asymptotic expansion, beyond the leading term (see the remark at the end of the paper).
\end{itemize}

Given the reflection coefficient $r\in H^{1,1}_1(\mathbb{R})$ associated with initial data $q_0\in H^{1,1}(\mathbb{R})$ via the spectral transform $\mathcal{R}$ for the Zakharov-Shabat operator $\mathcal{L}$,
the solution of the Cauchy problem for the nonlinear Schr\"{o}dinger equation \eqref{eq:NLSEQ} may be described as follows.  
Consider the following Riemann-Hilbert problem:
\begin{rhp}
\label{rhp:01}
Given parameters $(x,t)\in\mathbb{R}^2$, find $\mathbf{M} = \mathbf{M}(z) = \mathbf{M}(z;x,t)$, a $2 \times 2$ matrix, satisfying the following conditions:
\begin{itemize}
\item[]\textit{\textbf{Analyticity:}} $\mathbf{M}$ is an analytic function of $z$ in the domain $\mathbb{C}\setminus\mathbb{R}$.  Moreover, $\mathbf{M}$ has a continuous extension to the real axis from the upper (lower) half-plane denoted $\mathbf{M}_+(z)$ ($\mathbf{M}_-(z)$) for $z\in\mathbb{R}$.
\item[]\textit{\textbf{Jump condition:}} The boundary values satisfy the jump condition
\begin{equation}
\mathbf{M}_+(z)=\mathbf{M}_-(z)\mathbf{V}_\mathbf{M}(z),\quad z\in\mathbb{R},
\end{equation}
where the jump matrix $\mathbf{V}_\mathbf{M}(z)$ is defined by
\begin{equation}
\mathbf{V}_\mathbf{M}(z):=\begin{pmatrix}1-|r(z)|^2 & -\overline{r(z)}\ee^{-2\ii t\theta(z;z_0)}\\
r(z)\ee^{2\ii t\theta(z;z_0)} & 1\end{pmatrix},\quad z\in\mathbb{R},\quad\theta(z;z_0):=2z^2-4z_0z,\quad z_0:=-\frac{x}{4t}.
\label{eq:basic-jump}
\end{equation}
\item[]\textit{\textbf{Normalization:}}  There is a matrix $\mathbf{M}_1(x,t)$ such that
\begin{equation}
\mathbf{M}(z)=\mathbb{I} + z^{-1}\mathbf{M}_1(x,t) + o(z^{-1}),\quad z\to\infty.
\end{equation}
\end{itemize}
\end{rhp}
From the solution of this Riemann-Hilbert problem, one defines a function $q(x,t)$, $(x,t)\in\mathbb{R}^2$, by
\begin{equation}\label{qMrelation}
q(x,t) := 2 \ii M_{1,12}(x,t).
\end{equation}
The fact of the matter is then that $q(x,t)$ is the solution of the Cauchy problem \eqref{eq:NLSEQ}--\eqref{eq:IC}.  

Recent studies of the long-time behavior of the solution of the NLS initial-value problem \eqref{eq:NLSEQ}--\eqref{eq:IC} have involved the detailed analysis of the solution $\mathbf{M}$ to Riemann-Hilbert problem~\ref{rhp:01}.  As regularity assumptions on the initial data $q_{0}$ are relaxed, this analysis becomes more involved, technically.  The purpose of this manuscript is to carry out a complete analysis of the long-time asymptotic behavior of $\mathbf{M}$ under the assumption that $r \in H^{1,1}_{1}(\mathbb{R})$ (or really, $r\in H^{1,0}_1(\mathbb{R})$), as in \cite{Deif4}, but via a $\overline{\partial}$ approach which replaces technical harmonic analysis estimates involving Cauchy projection operators with very straightforward estimates involving some explicit double integrals.

The proof of Theorem~\ref{mainresult} using the methodology of \cite{McLa1,Mcla2} was originally obtained by the first two authors in 2008 \cite{DiengM08}.  Since then the technique has been used successfully to study many other related problems of large-time behavior for various integrable equations.  In \cite{CuccagnaJ16}, the authors used the methods of \cite{DiengM08} to analyze the stability of multi-dark-soliton solutions of \eqref{eq:NLSEQ}.  In \cite{BorgheseJM18}, the method of \cite{DiengM08} was used to confirm the soliton resolution conjecture for the focusing version of the NLS equation under generic conditions on the discrete spectrum.  In \cite{LiuPS18}, the large-time behavior of solutions of the derivative NLS equation was studied using $\dbar$ methods, and in \cite{JenkinsLPS18} the same techniques were used to establish a form of the soliton resolution conjecture for this equation.  Similar $\dbar$ methods more based on the original approach of \cite{McLa1,Mcla2} have also been useful in studying some problems of nonlinear wave theory not necessarily in the realm of large time asymptotics, for instance \cite{MillerQ15}, which deals with boundary-value problems for \eqref{eq:NLSEQ} in the semiclassical limit.  Based on this continued interest in $\dbar$ methods, we decided to write this review paper containing all of the results and arguments of \cite{DiengM08}, some in a new form, as well as some additional expository material which we hope the reader might find helpful.

\section{An unorthodox approach to the corresponding linear problem}
\label{sec:linear}
In order to motivate the $\dbar$ steepest descent method, we first consider the Cauchy problem for the linear equation corresponding to \eqref{eq:NLSEQ}, namely
\begin{equation}
\ii\frac{\partial q}{\partial t} +\frac{\partial^2 q}{\partial x^2}=0,
\end{equation}
with initial condition \eqref{eq:IC} for which $q_0\in H^{1,1}(\mathbb{R})$.  By Fourier transform theory, if 
\begin{equation}
\hat{q}_0(z):=\int_\mathbb{R}q_0(x)\ee^{2\ii z x}\,\dd x,\quad z\in\mathbb{R}
\end{equation}
is the Fourier transform of the initial data, then $\hat{q}_0$ as a function of $z\in\mathbb{R}$ also lies in the weighted Sobolev space $H^{1,1}(\mathbb{R})$, and the solution of the Cauchy problem is given in terms of $\hat{q}_0$ by the integral
\begin{equation}
q(x,t)=\frac{1}{\pi}\int_\mathbb{R}\hat{q}_0(z)\ee^{-2\ii t\theta(z;z_0)}\,\dd z,
\label{eq:linear-solution}
\end{equation}
where $\theta(z;z_0)$ and $z_0$ are as defined in \eqref{eq:basic-jump}.  It is worth noticing that this formula is exactly what arises from Riemann-Hilbert Problem~\ref{rhp:01} via the formula \eqref{qMrelation} if only the jump matrix $\mathbf{V}_\mathbf{M}(z)$ in \eqref{eq:basic-jump} is replaced with the triangular form
\begin{equation}
\mathbf{V}_\mathbf{M}(z):=\begin{pmatrix}1 & -\hat{q}_0(z)\ee^{-2\ii t\theta(z;z_0)}\\0 & 1\end{pmatrix},\quad z\in\mathbb{R}
\end{equation}
in which case the solution of Riemann-Hilbert Problem~\ref{rhp:01} is explicitly given by
\begin{equation}
\mathbf{M}(z;x,t)=\mathbb{I} -\frac{1}{2\pi\ii}\int_\mathbb{R}\frac{\hat{q}_0(\zeta)\ee^{-2\ii t\theta(\zeta;z_0)}}{\zeta-z}\,\dd\zeta\begin{pmatrix}0 & 1\\0 & 0\end{pmatrix}.
\end{equation} 
This shows that the reflection coefficient $r(z)$ is a nonlinear analogue of (the complex conjugate of) the Fourier transform $\hat{q}_0(z)$.  

Assuming that $z_0\in\mathbb{R}$ is fixed, the method of stationary phase applies to deduce an asymptotic expansion of the integral in \eqref{eq:linear-solution}.  The only point of stationary phase is $z=z_0$, and the classical formula of Stokes and Kelvin yields
\begin{equation}
q(x,t)=\frac{1}{\pi}\sqrt{\frac{2\pi}{t|-2\theta''(z_0;z_0)|}}\hat{q}_0(z_0)\ee^{-2\ii t\theta(z_0;z_0)-\ii\pi/4}+\mathcal{E}(x,t) = t^{-1/2}\frac{\hat{q}_0(z_0)\ee^{-\ii\pi/4}}{2\sqrt{\pi}}\ee^{\ii x^2/(4t)}+\mathcal{E}(x,t),
\label{eq:stationary-phase}
\end{equation}
where the error term is of order $t^{-3/2}$ as $t\to +\infty$ under the best assumptions on $\hat{q}_0$, assumptions that guarantee that the error has a complete asymptotic expansion in terms proportional via explicit oscillatory factors to descending half-integer powers of $t$.  To derive this expansion from first principles consists of several steps as follows.
\begin{itemize}
\item One introduces a smooth partition of unity to separate contributions to the integral from points $z$ close to $z_0$ and far from $z_0$.
\item One uses integration by parts to estimate the contributions from points $z$ far from $z_0$.  This requires having sufficiently many derivatives of $\hat{q}_0(z)$, which corresponds to having sufficient decay of $q_0(x)$.  
\item One approximates $\hat{q}_0(z)$ locally near $z_0$ by an \emph{analytic function} with an accuracy related to the size of $t$ and the number of terms of the expansion that are desired.
\item One uses Cauchy's theorem to deform the path of integration for the approximating integrand to a diagonal path over the stationary phase point.  The slope of the diagonal path produces the phase factor of $\ee^{-\ii\pi/4}$, and the path integral of the leading term $\hat{q}_0(z_0)\ee^{-2\ii t\theta(z;z_0)}$ in the local approximation of $\hat{q}_0(z)\ee^{-2\ii t\theta(z;z_0)}$ is a Gaussian integral that produces the factor of $\sqrt{\pi}$.
\end{itemize}
It is possible to implement all steps of this method assuming, say, that $q_0$ (and hence also $\hat{q}_0$) is a Schwartz-class function.  However, as one reduces the regularity of $q_0$ it becomes impossible to obtain an expansion to all orders.  More to the point, even in the presence of Schwartz-class regularity, the proof of the stationary phase expansion by the traditional methods outlined above is complicated, perhaps more so than necessary as we hope to convince the reader.

To explain an alternative approach that bears fruit in the case $q_0\in H^{1,1}(\mathbb{R})$ that is of interest here, let $\Omega$ denote a simply-connected region in the complex plane with counter-clockwise oriented piecewise-smooth boundary $\partial\Omega$.  If $f:\Omega\to\mathbb{C}$ is differentiable (as a function of two real variables $u=\re(z)$ and $v=\im(z)$) and extends continuously to $\partial\Omega$, then it follows from Stokes' theorem that
\begin{equation}
\oint_{\partial\Omega}f(u,v)\,\dd z = \iint_\Omega 2\ii\dbar f(u,v)\,\dd A(u,v)
\label{eq:Stokes}
\end{equation}
where $\dd A(u,v)$ denotes area measure in the plane and where $\dbar$ is the Cauchy-Riemann operator:
\begin{equation}
\dbar:=\frac{1}{2}\left(\frac{\partial}{\partial u}+\ii\frac{\partial}{\partial v}\right),\quad z=u+\ii v,
\label{eq:dbar}
\end{equation}
which annihilates all analytic functions of $z=u+\ii v$.
Now consider the diagram shown in Figure~\ref{fig:Integral}.
\begin{figure}[h]
\begin{center}
\includegraphics{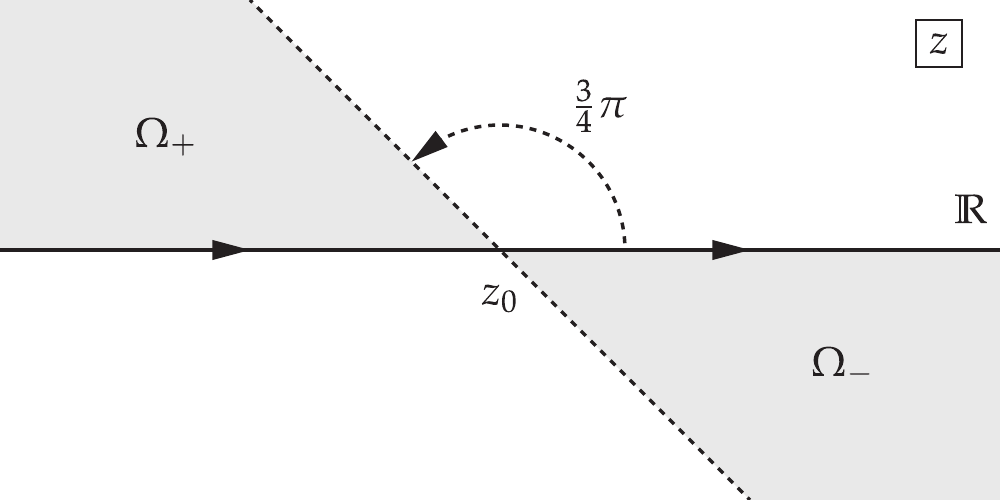}
\end{center}
\caption{The integration contour $\mathbb{R}$ in \eqref{eq:linear-solution} and the unbounded domains $\Omega_+$ and $\Omega_-$ in the $z=u+\ii v$ plane.}
\label{fig:Integral}
\end{figure}
We define a function $E(u,v)$ on $\Omega_+\cup\Omega_-$ as follows:
\begin{equation}
E(u,v):=\cos(2\arg(u+\ii v-z_0))\hat{q}_0(u) + \left(1-\cos(2\arg(u+\ii v-z_0))\right)\hat{q}_0(z_0),\;\; u+\ii v\in\Omega_+\cup\Omega_-.
\label{eq:extension-linear}
\end{equation}
Observe that:
\begin{itemize}
\item On the boundary $v=0$ (\textit{i.e.}, $z\in\mathbb{R}$), we have $\cos(2\arg(u+\ii v-z_0))\equiv 1$, so $E(u,0)=\hat{q}_0(u)$.
\item On the boundary $v=z_0-u$, we have $\cos(2\arg(u+\ii v-z_0))\equiv 0$, so $E(u,z_0-u)=\hat{q}_0(z_0)$ which is independent of $u$.
\end{itemize}
The first point shows that $E(u,v)$ is an \emph{extension} of the function $\hat{q}_0(z)$ from the real $z$-axis into the domain $\Omega_+\cup\Omega_-$.  The second point shows that the extension evaluates to a constant on the diagonal part of the boundary of $\Omega_+\cup\Omega_-$.  In the interior of $\Omega_+\cup\Omega_-$, $E(u,v)$ inherits smoothness properties from $\hat{q}_0(u)$.
In particular, under the assumption $\hat{q}_0\in H^{1,1}(\mathbb{R})$, we may apply Stokes' theorem in the form \eqref{eq:Stokes}
to the functions $\pm E(u,v)\ee^{-2\ii t\theta(u+\ii v;z_0)}$ on the domains $\Omega_\pm$ and add up the results to obtain the formula
\begin{equation}
q(x,t)=\frac{1}{\pi}\int_{\infty\ee^{3\pi\ii/4}}^{\infty\ee^{-\ii\pi/4}}\hat{q}_0(z_0)\ee^{-2\ii t\theta(z;z_0)}\,\dd z +
\frac{1}{\pi}\iint_{\Omega_+-\Omega_-}2\ii\dbar\left(E(u,v)\ee^{-2\ii t\theta(u+\ii v;z_0)}\right)\,\dd A(u,v).
\label{eq:q-Stokes}
\end{equation}
The first term on the right-hand side originates from the diagonal boundary of $\Omega_+\cup\Omega_-$ and because $E$ is constant there it is an exact Gaussian integral evaluating to the explicit leading term on the right-hand side of \eqref{eq:stationary-phase}.  Therefore, the remaining term on the right-hand side of \eqref{eq:q-Stokes} is an exact double-integral representation of the error term $\mathcal{E}(x,t)$ in the formula \eqref{eq:stationary-phase}.  Since $q_0\in H^{1,1}(\mathbb{R})$ implies $\hat{q}_0\in H^{1,1}(\mathbb{R})$ which in turn implies that $\hat{q}_0(z)$ is defined for all $z\in\mathbb{R}$, the leading term in \eqref{eq:stationary-phase} certainly makes sense.  

To estimate the error term we will only use the fact that $\hat{q}_0'\in L^2(\mathbb{R})$, \textit{i.e.}, that $\hat{q}_0$ lies in the (classical, unweighted) Sobolev space $H^1(\mathbb{R})$.  First note that since $\ee^{-2\ii t\theta(z;z_0)}$ is an entire function of $z$, $\dbar\ee^{-2\ii t\theta(z;z_0)}\equiv 0$, so by the product rule it suffices to have suitable estimates of $\dbar E(u,v)$ for $u+\ii v\in\Omega_\pm$.  Indeed, 
\begin{equation}
\begin{split}
\left|\iint_{\Omega_\pm}2\ii\dbar\left(E(u,v)\ee^{-2\ii t\theta(u+\ii v;z_0)}\right)\,\dd A(u,v)\right|&\le
2\iint_{\Omega_\pm}|\dbar E(u,v)|\ee^{2t\im(\theta(u+\ii v;z_0))}\,\dd A(u,v)\\ & = 
2\iint_{\Omega_\pm}|\dbar E(u,v)|\ee^{8t(u-z_0)v}\,\dd A(u,v).
\end{split}
\label{eq:linear-integral-estimate-1}
\end{equation}
A direct computation using \eqref{eq:dbar} gives
\begin{equation}
\begin{split}
\dbar E(u,v)&=\dbar\left[\hat{q}_0(z_0)+\cos(2\arg(u+\ii v-z_0))\left(\hat{q}_0(u)-\hat{q}_0(z_0)\right)\right]\\
&=\cos(2\arg(u+\ii v-z_0))\dbar\hat{q}_0(u) + \left(\hat{q}_0(u)-\hat{q}_0(z_0)\right)\dbar\cos(2\arg(u+\ii v-z_0))\\
&=\frac{1}{2}\cos(2\arg(u+\ii v-z_0))\hat{q}_0'(u) +\left(\hat{q}_0(u)-\hat{q}_0(z_0)\right)\dbar\cos(2\arg(u+\ii v-z_0)).
\end{split}
\end{equation}
In polar coordinates $(\rho,\phi)$ centered at the point $z_0\in\mathbb{R}$ and defined by $u=z_0+\rho\cos(\phi)$ and $v=\rho\sin(\phi)$, the Cauchy-Riemann operator \eqref{eq:dbar} takes the equivalent form
\begin{equation}
\dbar = \frac{\ee^{\ii\phi}}{2}\left(\frac{\partial}{\partial\rho}+\frac{\ii}{\rho}\frac{\partial}{\partial\phi}\right),
\label{eq:dbar-polar}
\end{equation}
so as $\arg(u+\ii v-z_0)=\phi$ we have 
\begin{equation}
\dbar\cos(2\arg(u+\ii v-z_0))=\frac{\ii\ee^{\ii\phi}}{2\rho}\frac{\dd}{\dd\phi}\cos(2\phi)=-\frac{\ii\ee^{\ii\phi}}{\rho}\sin(2\phi).
\end{equation}
Therefore we easily obtain the inequality
\begin{equation}
|\dbar E(u,v)|\le\frac{1}{2}|\hat{q}_0'(u)| + \frac{|\hat{q}_0(u)-\hat{q}_0(z_0)|}{\sqrt{(u-z_0)^2+v^2}},\quad u+\ii v\in\Omega_+\cup\Omega_-.
\label{eq:dbarE-1}
\end{equation}
Note that by the fundamental theorem of calculus and the Cauchy-Schwarz inequality,
\begin{equation}
\begin{split}
|\hat{q}_0(u)-\hat{q}_0(z_0)|\le\int_{z_0}^u|\hat{q}_0'(w)|\,|\dd w|&\le\sqrt{\int_{z_0}^u|\dd w|}\sqrt{\int_{z_0}^u|\hat{q}_0'(w)|^2\,|\dd w|}\\
&\le \|\hat{q}_0'\|_{L^2(\mathbb{R})}\sqrt{|u-z_0|}\le
\|\hat{q}_0'\|_{L^2(\mathbb{R})}\left[(u-z_0)^2+v^2\right]^{1/4},
\end{split}
\label{eq:qhat-difference}
\end{equation}
so \eqref{eq:dbarE-1} implies that also
\begin{equation}
|\dbar E(u,v)|\le\frac{1}{2}|\hat{q}_0'(u)| + \frac{\|\hat{q}_0'\|_{L^2(\mathbb{R})}}{[(u-z_0)^2+v^2]^{1/4}},\quad u+\ii v\in\Omega_+\cup\Omega_-.
\label{eq:dbarE-2}
\end{equation}
Therefore, using \eqref{eq:dbarE-2} in \eqref{eq:linear-integral-estimate-1} gives
\begin{equation}
\left|\iint_{\Omega_\pm}2\ii\dbar\left(E(u,v)\ee^{-2\ii t\theta(u+\ii v,z_0)}\right)\,\dd A(u,v)\right|\le 
I^\pm(x,t) + 2\|\hat{q}_0'\|_{L^2(\mathbb{R})}J^\pm(x,t),
\end{equation}
where
\begin{equation}
I^\pm(x,t):=\iint_{\Omega_\pm} |\hat{q}_0'(u)|\ee^{8t(u-z_0)v}\,\dd A(u,v)
\quad\text{and}\quad J^\pm(x,t):=\iint_{\Omega_\pm}\frac{\ee^{8t(u-z_0)v}}{[(u-z_0)^2+v^2]^{1/4}}\,\dd A(u,v).
\label{eq:linear-integrals}
\end{equation}
The key point is that for $t>0$, the exponential factors are bounded by $1$ and decaying at infinity in $\Omega_\pm$.  So, by iterated integration, Cauchy-Schwarz, and the change of variable $w=t^{1/2}(u-z_0)$,
\begin{equation}
\begin{split}
I^+(x,t)&=\int_{-\infty}^{z_0}\,\dd u\int_0^{z_0-u}\,\dd v\,|\hat{q}_0'(u)|\ee^{8t(u-z_0)v}\\
& = 
\int_{-\infty}^{z_0}\,\dd u |\hat{q}_0'(u)|\frac{1-\ee^{-8t(u-z_0)^2}}{8t(z_0-u)}\\ &\le
\|\hat{q}_0'\|_{L^2(\mathbb{R})}\sqrt{\int_{-\infty}^{z_0}\left[\frac{1-\ee^{-8t(u-z_0)^2}}{8t(z_0-u)}\right]^2\,\dd u}\\
&=K\|\hat{q}_0'\|_{L^2(\mathbb{R})}t^{-3/4},\quad K:=\sqrt{\int_{-\infty}^0\left[\frac{1-\ee^{-8w^2}}{8w}\right]^2\,\dd w}<\infty.
\end{split}
\end{equation}
In exactly the same way, we also get $I^-(x,t)\le K\|\hat{q}_0'\|_{L^2(\mathbb{R})}t^{-3/4}$.  Note that $K$ is an absolute constant.  The integrals $J^\pm(x,t)$ are independent of $q_0$ and by translation of $z_0$ to the origin and reflection through the origin, the integrals are also independent of $x$ and are obviously equal.  To calculate them we introduce rescaled polar coordinates by $u=z_0+t^{-1/2}\rho\cos(\phi)$ and $v=t^{-1/2}\rho\sin(\phi)$ to get
\begin{equation}
J^\pm(x,t)=Lt^{-3/4},\quad L:=\int_0^\infty\,\rho\dd\rho\int_{3\pi/4}^\pi\,\dd\phi \,\rho^{-1/2}\ee^{8\rho^2\sin(\phi)\cos(\phi)}
\end{equation}
It is a calculus exercise to show that the above double integral is convergent and hence defines $L$ as a second absolute constant.  

It follows from these elementary calculations that if only $\hat{q}_0'\in L^2(\mathbb{R})$, then the error term $\mathcal{E}(x,t)$ in \eqref{eq:stationary-phase} obeys the estimate
\begin{equation}
\sup_{x\in\mathbb{R}}|\mathcal{E}(x,t)|\le\frac{2}{\pi}(K+2L)\|\hat{q}_0'\|_{L^2(\mathbb{R})}t^{-3/4}
\end{equation}
which decays as $t\to+\infty$ at exactly the same rate as in the claimed result for the nonlinear problem as formulated in Theorem~\ref{mainresult}.  The same method can be used to obtain higher-order corrections under additional hypotheses of smoothness for the Fourier transform $\hat{q}_0$.  One simply needs to integrate by parts with respect to $u=\re(z)$ in the double integral on the right-hand side of \eqref{eq:q-Stokes}.

In the rest of the paper we will show that almost exactly the same elementary estimates suffice to prove the nonlinear analogue of this result, namely Theorem~\ref{mainresult}.  

\section{Proof of Theorem~\ref{mainresult}}
We will prove Theorem~\ref{mainresult} in several systematic steps.  After some preliminary observations involving the jump matrix $\mathbf{V}_\mathbf{M}(z)$ in Riemann-Hilbert Problem~\ref{rhp:01} in Sections~\ref{sec:factorize} and \ref{sec:diagonal}, we shall see that the subsequent analysis of Riemann-Hilbert Problem~\ref{rhp:01} parallels our study of the associated linear problem detailed in Section~\ref{sec:linear}.  In particular we find natural analogues of the nonanalytic extension method (Section~\ref{sec:extend}), the Gaussian integral giving the leading term in the stationary phase formula (Section~\ref{sec:PC}), and of the simple double integral estimates leading to the proof of its accuracy (Section~\ref{sec:dbar}).  Finally, in Section~\ref{sec:q} we assemble the ingredients to arrive at the formula \eqref{eq:AsForm} with the improved error estimate, completing the proof of Theorem~\ref{mainresult}.

\subsection{Jump matrix factorization}
\label{sec:factorize}
The jump matrix $\mathbf{V}_\mathbf{M}(z)$ of Riemann-Hilbert Problem~\ref{rhp:01} defined in \eqref{eq:basic-jump} can be factored in two different ways that are useful in different intervals of the jump contour $\mathbb{R}$ as indicated:
\begin{equation}
\mathbf{V}_\mathbf{M}(z)=\begin{pmatrix}1&-\overline{r(z)}\ee^{-2\ii t\theta(z;z_0)}\\0 & 1\end{pmatrix}\begin{pmatrix}1 & 0\\r(z)\ee^{2\ii t\theta(z;z_0)} & 1\end{pmatrix},\quad z>z_0,
\label{eq:factorization-right}
\end{equation}
and
\begin{equation}
\mathbf{V}_\mathbf{M}(z)=\begin{pmatrix}1 & 0\\\displaystyle\frac{r(z)\ee^{2\ii t\theta(z;z_0)}}{1-|r(z)|^2} & 1\end{pmatrix}
(1-|r(z)|^2)^{\sigma_3}\begin{pmatrix}1 & \displaystyle -\frac{\overline{r(z)}\ee^{-2\ii t\theta(z;z_0)}}{1-|r(z)|^2}\\0 & 1\end{pmatrix},\quad z<z_0.
\label{eq:factorization-left}
\end{equation}
The importance of these factorizations is that they provide an \emph{algebraic} separation of the oscillatory exponential factors $\ee^{\pm 2\ii t\theta(z;z_0)}$.  Indeed, if the reflection coefficient $r(z)$ is an analytic function of $z\in\mathbb{R}$, then in each case the left-most (right-most) factor has an analytic continuation into the lower (upper) half-plane near the indicated half-line that is exponentially decaying to the identity matrix as $t\to +\infty$ due to $z_0$ being a simple critical point of $\theta(z;z_0)$.  This observation is the basis for the steepest descent method for Riemann-Hilbert problems as first formulated in \cite{Deif6}.  In the more realistic case that $r(z)$ is nowhere analytic, this analytic continuation method must be supplemented with careful approximation arguments that are quite detailed \cite{Deif5}.  We will proceed differently in Section~\ref{sec:extend} below.  But first we need to deal with the central diagonal factor in the factorization \eqref{eq:factorization-left} to be used for $z<z_0$.

\subsection{Modification of diagonal jump}
\label{sec:diagonal}
We now show how the diagonal factor $(1-|r(z)|^2)^{\sigma_3}$ in the jump matrix factorization \eqref{eq:factorization-left} can be replaced with a constant diagonal matrix.  Consider the complex scalar function defined by the formula
\begin{equation}
\delta(z;z_0):=\exp\left(\frac{1}{2\pi\ii}\int_{-\infty}^{z_0}\frac{\ln(1-|r(s)|^2)}{s-z}\,\dd s\right),\quad z\in\mathbb{C}\setminus (-\infty,z_0].
\label{eq:delta-define}
\end{equation}
This function is important because according to the Plemelj formula, it satisfies the scalar jump conditions $\delta_+(z;z_0)=\delta_-(z;z_0)(1-|r(z)|^2)$ for $z<z_0$ and $\delta_+(z;z_0)=\delta_-(z;z_0)$ for $z>z_0$.  Hence the diagonal matrix $\delta(z;z_0)^{\sigma_3}$ is typically used in steepest descent theory to deal with the diagonal factor in \eqref{eq:factorization-left}.  However, $\delta(z;z_0)$ has a mild singularity at $z=z_0$:
\begin{equation}
\delta(z;z_0)=K(z-z_0)^{\ii\nu(z_0)}(1+o(1)),\quad z\to z_0,\quad K=K(z_0)=\text{constant},
\end{equation}
where $\nu(z_0)$ is defined in \eqref{eq:nu-define} and the power function is interpreted as the principal branch.  The use of $\delta(z;z_0)$ introduces this singularity unnecessarily into the Riemann-Hilbert analysis.  In our approach we will therefore use a related function:
\begin{equation}
f(z;z_0):=c(z_0)\delta(z;z_0)(z-z_0)^{-\ii\nu(z_0)},
\label{eq:f-define}
\end{equation}
where the constant $c(z_0)$ is defined by
\begin{equation}
\begin{split}
\quad c(z_0):=&\exp\left(-\frac{1}{2\pi\ii}\left[\int_{-\infty}^{z_0-1}\frac{\ln(1-|r(s)|^2)}{s-z_0}\,\dd s +\int_{z_0-1}^{z_0}\frac{\ln(1-|r(s)|^2)-\ln(1-|r(z_0)|^2)}{s-z_0}\,\dd s\right]\right)\\
=&\exp\left(\frac{1}{2\pi\ii}\int_{-\infty}^{z_0}\ln(z_0-s)\,\dd\ln(1-|r(s)|^2)\right).
\end{split}
\label{eq:c-define}
\end{equation}
The function $f(z;z_0)$ has numerous useful properties that we summarize here.
\begin{lemma}[Properties of $f(z;z_0)$]
Suppose that $r\in H^1(\mathbb{R})$ and there exists $\rho<1$ such that $|r(z)|\le \rho$ holds for all $z\in\mathbb{R}$ (as is implied by $r\in H^{1,1}_1(\mathbb{R})$ which follows from $q_0\in H^{1,1}(\mathbb{R})$).  Then
\begin{itemize}
\item The functions $f(z;z_0)^{\pm 1}$ are well-defined and analytic in $z$ for $\arg(z-z_0)\in (-\pi,\pi)$.
\item The functions $f(z;z_0)^{\pm 1}$ are uniformly bounded independently of $z_0\in\mathbb{R}$:  \begin{equation}
\mathop{\sup_{z_0\in\mathbb{R}}}_{\arg(z-z_0)\in (-\pi,\pi)}|f(z;z_0)|^{\pm 1}\le\frac{1}{1-\rho^2}.
\end{equation}
\item The function $f(z;z_0)$ satisfies the following asymptotic condition:
\begin{equation}
\mathop{\lim_{z\to\infty}}_{-\pi<\arg(z-z_0)<\pi}f(z;z_0)(z-z_0)^{\ii\nu(z_0)}= c(z_0).
\label{eq:f-limit}
\end{equation}
\item The functions $f(z;z_0)^{\pm 2}$ are H\"older continuous with exponent $1/2$.  In particular,  $f(z;z_0)^{\pm 2}\to 1$ as $z\to z_0$ and there is a constant $K=K(\rho)>0$ such that $|f(z;z_0)^{\pm 2}-1|\le K|z-z_0|^{1/2}$ holds whenever $\arg(z-z_0)\in (-\pi,\pi)$.
\item The continuous boundary values $f_\pm(z;z_0)$ taken by $f(z;z_0)$ on $\mathbb{R}$ for $z<z_0$ from $\pm\mathrm{Im}(z)>0$ satisfy the jump condition
\begin{equation}
f_+(z;z_0)=f_-(z;z_0)\frac{1-|r(z)|^2}{1-|r(z_0)|^2},\quad z<z_0.
\end{equation}
\end{itemize}
\label{lem:f}
\end{lemma}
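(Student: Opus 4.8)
The plan is to reduce all five items to one convenient representation of $\log f(z;z_0)$, after which everything becomes an elementary estimate in the spirit of Section~\ref{sec:linear}. Write $L(s):=\ln(1-|r(s)|^2)$ and $L_0:=L(z_0)=-2\pi\nu(z_0)$. Since $r\in H^1(\mathbb R)$ embeds in $C_0(\mathbb R)\cap L^2(\mathbb R)$, the hypothesis $|r|\le\rho<1$ gives $L(s)\in[\ln(1-\rho^2),0]$ and $|L(s)|=O(|r(s)|^2)\in L^1(\mathbb R)$, while $L$ is $\tfrac12$-H\"older with constant controlled by $\rho$ and $\|r'\|_{L^2}$ (fundamental theorem of calculus and Cauchy--Schwarz, exactly as in \eqref{eq:qhat-difference}). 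Because $L\in L^1(\mathbb R)$ and $1/(s-z)$ is bounded on compact subsets of $\mathbb C\setminus(-\infty,z_0]$, the integral in \eqref{eq:delta-define} converges absolutely and locally uniformly, so $\delta(z;z_0)$ is analytic and non-vanishing off the cut; the constant $c(z_0)$ is well defined and non-zero because the first (regularized) integral in \eqref{eq:c-define} converges absolutely ($|s-z_0|^{-1}\le1$ far from $z_0$, and the difference quotient is $O(|s-z_0|^{-1/2})$ near $z_0$ by the $\tfrac12$-H\"older bound). Together with the principal branch of $(z-z_0)^{\mp\ii\nu(z_0)}$ this proves the first bullet, and since $f(z;z_0)(z-z_0)^{\ii\nu(z_0)}=c(z_0)\delta(z;z_0)$ identically while $\delta(z;z_0)\to1$ as $z\to\infty$ (with $\arg(z-z_0)$ bounded away from $\pm\pi$, as along the steepest-descent rays used later), the limit \eqref{eq:f-limit} follows.

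For the uniform bound, take real parts in \eqref{eq:delta-define} using $\re\bigl(1/(2\pi\ii(s-z))\bigr)=\im z/(2\pi|s-z|^2)$, note that $|c(z_0)|=1$ (its exponent is purely imaginary) and $\ln\lvert(z-z_0)^{-\ii\nu(z_0)}\rvert=\nu(z_0)\arg(z-z_0)$, and invoke the elementary identity $\arg(z-z_0)=\int_{-\infty}^{z_0}\im z\,|s-z|^{-2}\,\dd s$ (it is the total increment of $\arg(z-s)$ as $s$ runs from $-\infty$ to $z_0$) together with $\nu(z_0)=-L_0/(2\pi)$, to obtain
\[
\ln|f(z;z_0)|=\frac{1}{2\pi}\int_{-\infty}^{z_0}\bigl(L(s)-L_0\bigr)\frac{\im z}{|s-z|^2}\,\dd s .
\]
Since $L(s)$ and $L_0$ both lie in $[\ln(1-\rho^2),0]$ we have $|L(s)-L_0|\le-\ln(1-\rho^2)$, and $\int_{-\infty}^{z_0}|\im z|\,|s-z|^{-2}\,\dd s=|\arg(z-z_0)|<\pi$, so $\bigl|\ln|f(z;z_0)|\bigr|\le-\tfrac12\ln(1-\rho^2)$ uniformly in $z_0$; this implies (and slightly improves) the stated bound $|f(z;z_0)|^{\pm1}\le(1-\rho^2)^{-1}$.

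For the jump relation, $\delta$ is the exponential of the Cauchy transform of the density $L$ over $(-\infty,z_0]$, so by the Plemelj formula $\delta_+/\delta_-=1-|r(z)|^2$ for $z<z_0$ and $\delta_+=\delta_-$ for $z>z_0$; since the principal-branch factor satisfies $(z-z_0)_+^{-\ii\nu(z_0)}/(z-z_0)_-^{-\ii\nu(z_0)}=\ee^{2\pi\nu(z_0)}=(1-|r(z_0)|^2)^{-1}$ across $(-\infty,z_0)$, multiplying yields the last bullet, and continuity of the boundary values follows from Plemelj--Privalov since $L$ is $\tfrac12$-H\"older. For the behavior near $z_0$, split the integral in $\log\delta$ at $s=z_0-1$ and subtract the constant $L_0$ on $[z_0-1,z_0]$: the integral of that constant against $1/(s-z)$ contributes exactly $\ii\nu(z_0)\log(z-z_0)$ plus a term that is $O(z-z_0)$, cancelling the factor $-\ii\nu(z_0)\log(z-z_0)$ in $\log f$; the leftover constant is precisely what $\log c(z_0)$ removes, up to an integer multiple of $2\pi\ii$, by the equality of the two forms in \eqref{eq:c-define}; and the remaining piece is the Cauchy transform over $(-\infty,z_0]$ of a density that is $\tfrac12$-H\"older and \emph{vanishes at the endpoint} $z_0$, hence is itself $\tfrac12$-H\"older near $z_0$ with constant controlled by $\rho$ and $\|r'\|_{L^2}$. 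Thus $f(z;z_0)=1+O(|z-z_0|^{1/2})$ as $z\to z_0$; combined with the uniform bound, $|f(z;z_0)^{\pm2}-1|=|f^{\pm1}-1|\,|f^{\pm1}+1|\le K|z-z_0|^{1/2}$ for $|z-z_0|\le1$, and the inequality is immediate from the previous bound for $|z-z_0|\ge1$ after enlarging $K$.

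The one genuinely delicate step is the branch bookkeeping in the previous paragraph: verifying that the logarithmic singularity of the Cauchy transform in $\log\delta$ at $z_0$ is cancelled \emph{exactly} by $(z-z_0)^{-\ii\nu(z_0)}$, and that the residual constant is absorbed by $\log c(z_0)$ via \eqref{eq:c-define}, all with the H\"older constant held uniform in $z_0$. Everything else reduces to the same elementary toolkit --- Cauchy--Schwarz, the fundamental theorem of calculus, and estimation of a kernel bounded by $1$ --- already used on the linear problem in Section~\ref{sec:linear}.
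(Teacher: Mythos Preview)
Your proof is correct, and for items one, three, four, and five it follows essentially the same path as the paper: absolute convergence of the Cauchy integral gives analyticity; dominated convergence gives the limit at infinity; splitting off the constant $L_0=\ln(1-|r(z_0)|^2)$ near the endpoint and invoking Plemelj--Privalov handles the H\"older estimate at $z_0$ (the paper makes this explicit via the rewritten formula \eqref{eq:f-rewrite}, whereas you describe the same cancellation verbally); and Plemelj plus the branch jump of $(z-z_0)^{-\ii\nu(z_0)}$ gives the last item.

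Where you genuinely diverge from the paper is in the uniform bound. The paper estimates $|\delta(z;z_0)|$ and $|(z-z_0)^{-\ii\nu(z_0)}|$ separately, each lying in $[(1-\rho^2)^{1/2},(1-\rho^2)^{-1/2}]$, and multiplies to get $|f|^{\pm 1}\le (1-\rho^2)^{-1}$. You instead combine them first via the exact identity
\[
\ln|f(z;z_0)|=\frac{1}{2\pi}\int_{-\infty}^{z_0}\bigl(L(s)-L_0\bigr)\frac{\im z}{|s-z|^2}\,\dd s,
\]
which follows because the harmonic measure $\int_{-\infty}^{z_0}\im z\,|s-z|^{-2}\,\dd s$ equals $\arg(z-z_0)$ exactly, so the $L_0$ piece reproduces $\nu(z_0)\arg(z-z_0)=\ln|(z-z_0)^{-\ii\nu(z_0)}|$. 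This is cleaner and, as you note, sharper: it yields $|f|^{\pm 1}\le (1-\rho^2)^{-1/2}$ rather than $(1-\rho^2)^{-1}$. The paper's route has the advantage of reusing a known bound on $\delta$ from \cite{Deif5}, but your identity makes the cancellation between $\delta$ and the power factor transparent and avoids losing a square root in the constant.
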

\begin{proof}
The assumptions imply in particular that $\ln(1-|r(\cdot)|^2)\in L^1(\mathbb{R})$, so for $z$ in a small neighborhood of each point disjoint from the integration contour, the integral in \eqref{eq:delta-define} is absolutely convergent and so $\delta(z;z_0)$ and $\delta(z;z_0)^{-1}$ are analytic functions of $z$ on that neighborhood.  The same argument shows that the first integral in the exponent of the expression \eqref{eq:c-define} for $c(z_0)$ is convergent. Since $r\in H^1(\mathbb{R})$ implies that $r(\cdot)$ is H\"older continuous with exponent $1/2$, the condition $|r(\cdot)|\le \rho<1$ further implies that $\ln(1-|r(s)|^2)$ is also H\"older continuous with exponent $1/2$, from which it follows that the second integral in the exponent of the expression \eqref{eq:c-define} is also convergent.  Therefore $c(z_0)$ exists, and clearly $|c(z_0)|=1$.  Since the principal branch of $(z-z_0)^{\mp\ii\nu(z_0)}$ is analytic for $\arg(z-z_0)\in (-\pi,\pi)$, the analyticity of $f(z;z_0)^{\pm 1}$ in the same domain follows.  This proves the first statement.  

In \cite[Proposition 2.12]{Deif5} it is asserted that under the hypothesis $|r(z)|\le\rho<1$, the function $\delta(z;z_0)$ defined by \eqref{eq:delta-define} satisfies the uniform estimates $(1-\rho^2)^{1/2}\le |\delta(z;z_0)|^{\pm 1}\le (1-\rho^2)^{-1/2}$ whenever $\arg(z-z_0)\in (-\pi,\pi)$. 
If $\arg(z-z_0)=0$, then obviously $|\delta(z;z_0)|=1$, so it remains to prove the estimates hold for $\im(z)\neq 0$. 
Following \cite{LiuPS18}, since $\ln(1-\rho^2)\le\ln(1-|r(s)|^2)\le 0$, if $u=\re(z)$ and $v=\im(z)$ we have $\im((s-z)^{-1})=v/((s-u)^2+v^2)$, so assuming $v>0$,
\begin{equation}
 \exp\left(\frac{v\ln(1-\rho^2)}{2\pi}\int_{-\infty}^{z_0}\frac{\dd s}{(s-u)^2+v^2}\right)\le |\delta(u+\ii v;z_0)|.
\end{equation}
Bounding the left-hand side below by extending the integration to $\mathbb{R}$ (using $v\ln(1-\rho^2)<0$) gives the lower bound $(1-\rho^2)^{1/2}\le |\delta(z;z_0)|$, and by taking reciprocals, the upper bound $|\delta(z;z_0)|^{-1}\le (1-\rho^2)^{-1/2}$ for $\im(z)>0$.  
The corresponding result for $\im(z)<0$ follows by the exact symmetry $\delta(\bar{z};z_0)^{-1}=\overline{\delta(z;z_0)}$.
Combining these bounds with $|c(z_0)|=1$ and
the elementary inequalities $(1-\rho^2)^{1/2}\le(1-|r(z_0)|^2)^{1/2}=\ee^{-\pi\nu(z_0)}\le |(z-z_0)^{\ii\nu(z_0)}|\le\ee^{\pi\nu(z_0)}=(1-|r(z_0)|^2)^{-1/2}\le (1-\rho^2)^{-1/2}$ holding for $\arg(z-z_0)\in (-\pi,\pi)$ then proves the second statement.

Since $\ln(1-|r(\cdot)|^2)\in L^1(\mathbb{R})$, from \eqref{eq:delta-define} a dominated convergence argument shows that $\delta(z;z_0)\to 1$ as $z\to\infty$ provided only that the limit is taken in such a way that for some given $\epsilon>0$, $\mathrm{dist}(z,[-\infty,z_0))\ge \epsilon$.
Combining this fact with \eqref{eq:f-define} proves the third statement.

Analyticity implies H\"older continuity, so provided $z$ is bounded away from the half-line $(-\infty,z_0]$, H\"older-$1/2$ continuity of $f(z;z_0)^{\pm 2}$ is obvious.  But, since $\ln(1-|r(\cdot)|^2)$ is H\"older continuous on $\mathbb{R}$ with exponent $1/2$, by the Plemelj-Privalov theorem \cite[\S 19]{Muskhelishvili92} and a related classical result \cite[\S 22]{Muskhelishvili92}, the functions $\delta(z;z_0)^{\pm 1}$ are uniformly H\"older continuous with exponent $1/2$ in any neighborhood of the integration contour \emph{except} for the endpoint $z=z_0$, and hence the same is true for the functions $f(z;z_0)^{\pm 2}$.  However, the latter functions are better-behaved near $z=z_0$. To see this, note that since
\begin{equation}
\begin{split}
(z-z_0)^{\mp\ii\nu(z_0)} &= (z-(z_0-1))^{\mp\ii\nu(z_0)}\left[\frac{z-z_0}{z-(z_0-1)}\right]^{\mp\ii\nu(z_0)}\\ &=(z-(z_0-1))^{\mp\ii\nu(z_0)}\exp\left(\mp\frac{1}{2\pi\ii}\int_{z_0-1}^{z_0}\frac{\ln(1-|r(z_0)|^2)}{s-z}\,\dd s\right),\quad z\in\mathbb{C}\setminus (-\infty,z_0],
\end{split}
\end{equation}
we have from \eqref{eq:delta-define} and \eqref{eq:f-define} that
\begin{multline}
f(z;z_0)^{\pm 2}=\\
c(z_0)^{\pm 2}(z-(z_0-1))^{\mp 2\ii\nu(z_0)}\exp\left(\pm\frac{1}{\pi\ii}\int_{-\infty}^{z_0-1}\frac{\ln(1-|r(s)|^2)}{s-z}\,\dd s\right)
\exp\left(\pm\frac{1}{\pi\ii}\int_{z_0-1}^{+\infty}\frac{h(s)\,\dd s}{s-z}\right)
\label{eq:f-rewrite}
\end{multline}
where $h(s):=\ln(1-|r(s)|^2)-\ln(1-r(z_0)|^2)$ for $s<z_0$ and $h(s):=0$ for $s\ge z_0$.  As the first three factors are analytic at $z=z_0$ while $h(s)$ is H\"older continuous with exponent $1/2$ in a neighborhood of $s=z_0$, the same arguments cited above apply and yield the desired H\"older continuity of $f(z;z_0)^{\pm 2}$ near $z=z_0$.  It only remains to show that $f(z_0;z_0)^{\pm 2}=1$, but this follows immediately from \eqref{eq:c-define} and \eqref{eq:f-rewrite}.  This proves the fourth statement.

Finally, the fifth statement follows from the definition \eqref{eq:f-define} of $f(z;z_0)$ and the jump condition $\delta_+(z;z_0)=\delta_-(z;z_0)(1-|r(z)|^2)$ for $z<z_0$.
\end{proof}

Using the diagonal matrix $f(z;z_0)^{\sigma_3}$ to conjugate the unknown $\mathbf{M}(z)$ of Riemann-Hilbert Problem~\ref{rhp:01} by introducing
\begin{multline}
\mathbf{N}(z)=\mathbf{N}(z;x,t):=\ee^{\ii\omega(z_0)\sigma_3/2}\ee^{\ii t\theta(z_0;z_0)\sigma_3}\cdot c(z_0)^{\sigma_3}\mathbf{M}(z;x,t)f(z;z_0)^{-\sigma_3}\cdot \ee^{-\ii t\theta(z_0;z_0)\sigma_3}\ee^{-\ii\omega(z_0)\sigma_3/2},\\ z\in\mathbb{C}\setminus\Sigma=\mathbb{R},
\label{eq:N-M}
\end{multline}
where 
\begin{equation}
\omega(z_0):=\arg(r(z_0)),
\label{eq:omega-define}
\end{equation}
it is easy to check that $\mathbf{N}(z)$ satisfies several conditions explicitly related to those of $\mathbf{M}(z)$ according to Riemann-Hilbert Problem~\ref{rhp:01}.  Indeed, $\mathbf{N}(z)$ must be a solution of the following equivalent problem. 
\begin{rhp}
Given parameters $(x,t)\in\mathbb{R}^2$, find $\mathbf{N} = \mathbf{N}(z) = \mathbf{N}(z;x,t)$, a $2 \times 2$ matrix, satisfying the following conditions:
\begin{itemize}
\item[]\textit{\textbf{Analyticity:}} $\mathbf{N}$ is an analytic function of $z$ in the domain $\mathbb{C}\setminus\mathbb{R}$.  Moreover, $\mathbf{N}$ has a continuous extension to the real axis from the upper (lower) half-plane denoted $\mathbf{N}_+(z)$ ($\mathbf{N}_-(z)$) for $z\in\mathbb{R}$.
\item[]\textit{\textbf{Jump condition:}} The boundary values satisfy the jump condition
\begin{equation}
\mathbf{N}_+(z)=\mathbf{N}_-(z)\mathbf{V}_\mathbf{N}(z),\quad z\in\mathbb{R},
\end{equation}
where the jump matrix $\mathbf{V}_\mathbf{N}(z)$ may be written in the alternate forms
\begin{multline}
\mathbf{V}_\mathbf{N}(z)=\begin{pmatrix}1 & -f(z;z_0)^{2}\overline{r(z)}\ee^{\ii\omega(z_0)}\ee^{-2\ii t[\theta(z;z_0)-\theta(z_0;z_0)]}\\0 & 1\end{pmatrix}\\{}\cdot
\begin{pmatrix}1 & 0\\f(z;z_0)^{-2}r(z)\ee^{-\ii\omega(z_0)}\ee^{2\ii t[\theta(z;z_0)-\theta(z_0;z_0)]} & 1\end{pmatrix},\quad z>z_0,
\end{multline}
\begin{multline}
\mathbf{V}_\mathbf{N}(z):=\begin{pmatrix}1 & 0 \\\displaystyle \frac{f_-(z;z_0)^{-2}r(z)\ee^{-\ii\omega(z_0)}\ee^{2\ii t[\theta(z;z_0)-\theta(z_0;z_0)]}}{1-|r(z)|^2} & 1\end{pmatrix}\\{}\cdot(1-|r(z_0)|^2)^{\sigma_3}\begin{pmatrix}1 & \displaystyle -\frac{f_+(z;z_0)^{2}\overline{r(z)}\ee^{\ii\omega(z_0)}\ee^{-2\ii t[\theta(z;z_0)-\theta(z_0;z_0)]}}{1-|r(z)|^2}\\0 & 1\end{pmatrix},\quad z<z_0,
\end{multline}
where $f_+(z;z_0)$ ($f_-(z;z_0)$) is the boundary value taken by $f(z;z_0)$ from the upper (lower) half-plane.
\item[]\textit{\textbf{Normalization:}}  There is a matrix $\mathbf{N}_1(x,t)$ such that
\begin{equation}
\mathbf{N}(z)(z-z_0)^{-\ii\nu(z_0)\sigma_3}=\mathbb{I} + z^{-1}\mathbf{N}_1(x,t) + o(z^{-1}),\quad z\to\infty.
\label{eq:N-normalize}
\end{equation}
\end{itemize}
\label{rhp:N}
\end{rhp}
Note that the matrix coefficient $\mathbf{N}_1(x,t)$ is necessarily related to the coefficient $\mathbf{M}_1(x,t)$ in Riemann-Hilbert Problem~\ref{rhp:01} by a diagonal conjugation:  
\begin{equation}
\mathbf{M}_1(x,t)=\ee^{-\ii\omega(z_0)\sigma_3/2}\ee^{-\ii t\theta(z_0;z_0)\sigma_3}c(z_0)^{-\sigma_3}\mathbf{N}_1(x,t)c(z_0)^{\sigma_3}\ee^{\ii t\theta(z_0;z_0)\sigma_3}\ee^{\ii\omega(z_0)\sigma_3/2}.  
\end{equation}
Therefore, the reconstruction formula \eqref{qMrelation} can be written in terms of $\mathbf{N}_1(x,t)$ as
\begin{equation}
q(x,t):=2\ii \ee^{-\ii\omega(z_0)}\ee^{-2\ii t\theta(z_0;z_0)}c(z_0)^{-2}N_{1,12}(x,t).
\label{eq:qNrelation}
\end{equation}
The net effect of this step is therefore to replace the non-constant diagonal central factor in \eqref{eq:factorization-left} with its constant value at $z=z_0$ and to introduce power-law asymptotics at $z=\infty$ at the cost of slight modifications of the left-most and right-most factors in \eqref{eq:factorization-right}--\eqref{eq:factorization-left}.  In the formula \eqref{eq:N-M} we have also taken the opportunity to conjugate off the constant value of $\theta(z;z_0)$ and the phase of $r(z)$ at the critical point $z=z_0$.

\subsection{Nonanalaytic extensions and $\dbar$ steepest descent}
\label{sec:extend}
The key to the steepest descent method, both in its classical analytic framework and in the $\dbar$ setting, is to get the oscillatory factors $\ee^{\pm 2\ii t\theta(z;z_0)}$ off the real axis and into appropriate sectors of the complex $z$-plane where they decay as $t\to+\infty$.  We will accomplish this by exactly the same means as in the linear case, namely by defining non-analytic extensions of the non-oscillatory coefficients of $\ee^{\pm 2\ii t\theta(z;z_0)}$ in the left-most and right-most jump matrix factors in \eqref{eq:factorization-right}--\eqref{eq:factorization-left} by a slight generalization of the formula \eqref{eq:extension-linear}.  In reference to the diagram in Figure~\ref{fig:RHP-deform},
\begin{figure}[h]
\begin{center}
\includegraphics{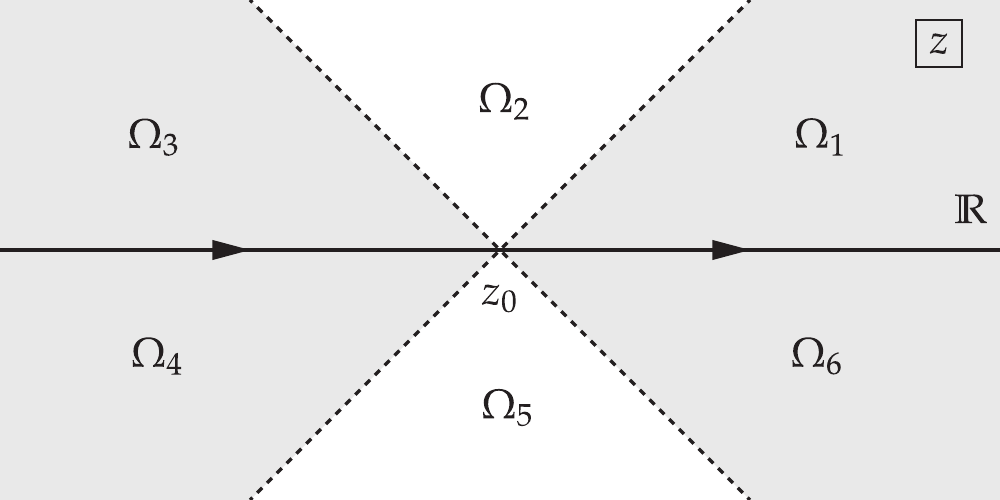}
\end{center}
\caption{The jump contour $\mathbb{R}$ in Riemann-Hilbert Problem~\ref{rhp:N} and the sectors $\Omega_j$, $j=1,\dots,6$ in the $z=u+\ii v$ plane.}
\label{fig:RHP-deform}
\end{figure}
we define sectors 
\begin{equation}
\begin{split}
\Omega_1:&\quad 0<\arg(z-z_0)<\frac{1}{4}\pi\\
\Omega_2:&\quad \frac{1}{4}\pi<\arg(z-z_0)<\frac{3}{4}\pi\\
\Omega_3:&\quad\frac{3}{4}\pi<\arg(z-z_0)<\pi\\
\Omega_4:&\quad-\pi<\arg(z-z_0)<-\frac{3}{4}\pi\\
\Omega_5:&\quad-\frac{3}{4}\pi<\arg(z-z_0)<-\frac{1}{4}\pi\\
\Omega_6:&\quad-\frac{1}{4}\pi<\arg(z-z_0)<0.
\end{split}
\end{equation}
Note that $\Omega_3=\Omega_+$ and $\Omega_6=\Omega_-$ in reference to Figure~\ref{fig:Integral}. Now we define extensions on the domains shaded in Figure~\ref{fig:RHP-deform} by following a very similar approach as in Section~\ref{sec:linear}:
\begin{equation}
\begin{split}
E_1(u,v)&:=\cos(2\arg(u+\ii v-z_0))f(u+\ii v;z_0)^{-2}r(u)\ee^{-\ii\omega(z_0)}
\\
&\quad{}+ (1-\cos(2\arg(u+\ii v-z_0)))|r(z_0)|,\quad z=u+\ii v\in\Omega_1\\
E_3(u,v)&:=
-\left[\cos(2\arg(u+\ii v-z_0))f(u+\ii v;z_0)^2\frac{\overline{r(u)}\ee^{\ii\omega(z_0)}}{1-|r(u)|^2}
\vphantom{+(1-\cos(2\arg(u+\ii v-z_0)))\frac{|r(z_0)|}{1-|r(z_0)|^2}}\right.\\
&\quad{}\left.
\vphantom{\cos(2\arg(u+\ii v-z_0))\frac{\overline{r(u)}\ee^{\ii\omega(z_0)}}{1-|r(u)|^2}}
+(1-\cos(2\arg(u+\ii v-z_0)))\frac{|r(z_0)|}{1-|r(z_0)|^2}\right],\quad z=u+\ii v\in\Omega_3\\
E_4(u,v)&:=
\cos(2\arg(u+\ii v-z_0))f(u+\ii v;z_0)^{-2}\frac{r(u)\ee^{-\ii\omega(z_0)}}{1-|r(u)|^2}
\\
&\quad+(1-\cos(2\arg(u+\ii v-z_0)))\frac{|r(z_0)|}{1-|r(z_0)|^2},\quad z=u+\ii v\in\Omega_4\\
E_6(u,v)&:=
-\left[\cos(2\arg(u+\ii v-z_0))f(u+\ii v;z_0)^2\overline{r(u)}\ee^{\ii\omega(z_0)}
\vphantom{+(1-\cos(2\arg(u+\ii v-z_0)))|r(z_0)|f(u+\ii v;z_0)^{-2}}\right.\\
&\quad{}\left.
\vphantom{\cos(2\arg(u+\ii v-z_0))\overline{r(u)}\ee^{\ii\omega(z_0)}}
+(1-\cos(2\arg(u+\ii v-z_0)))|r(z_0)|\right],\quad z=u+\ii v\in\Omega_6.
\end{split}
\label{eq:dbar-extensions}
\end{equation}
It is easy to check that:
\begin{itemize}
\item $E_1(u,v)$ evaluates to $f(z;z_0)^{-2}r(z)\ee^{-\ii\omega(z_0)}$ for $z\in\mathbb{R}$ on the boundary of $\Omega_1$.
\item $E_3(u,v)$ evaluates to $-f_+(z;z_0)^2\overline{r(z)}\ee^{\ii\omega(z_0)}/(1-|r(z)|^2)$ for $z\in\mathbb{R}$ on the boundary of $\Omega_3$.
\item $E_4(u,v)$ evaluates to $f_-(z;z_0)^{-2}r(z)\ee^{-\ii\omega(z_0)}/(1-|r(z)|^2)$ for $z\in\mathbb{R}$ on the boundary of $\Omega_4$.
\item $E_6(u,v)$ evaluates to $-f(z;z_0)^2\overline{r(z)}\ee^{\ii\omega(z_0)}$ for $z\in\mathbb{R}$ on the boundary of $\Omega_6$.
\end{itemize}
Thus exactly as in Section~\ref{sec:linear} these formul\ae\ represent extensions of their values on the real sector boundaries into the complex plane that become constant on the diagonal sector boundaries, with the constant chosen in each case to ensure continuity of the extension along the interior boundary of each sector.  The only essential difference between the extension formul\ae\ \eqref{eq:dbar-extensions} and the formula \eqref{eq:extension-linear} from Section~\ref{sec:linear} is the way that the factors $f(z;z_0)^{\pm 2}$ are treated differently from the factors involving $r(z)$; the reason for using $f(u+\ii v;z_0)^{\pm 2}$ in \eqref{eq:dbar-extensions} rather than $f(u;z_0)^{\pm 2}$ will become clearer in Section~\ref{sec:dbar} when we compute $\dbar E_j(u,v)$, $j=1,3,4,6$, and take advantage of the fact (see Lemma~\ref{lem:f}) that $\dbar f(u+\ii v;z_0)^{\pm 2}\equiv 0$ in the interior of each sector.

We use the extensions to ``open lenses'' about the intervals $z<z_0$ and $z>z_0$ by making another substitution:
\begin{equation}
\mathbf{O}(u,v;x,t):=\begin{cases}
\mathbf{N}(z;x,t)\begin{pmatrix}1&0\\E_1(u,v)\ee^{2\ii t[\theta(u+\ii v;z_0)-\theta(z_0;z_0)]} & 1\end{pmatrix}^{-1},&\quad z=u+\ii v\in\Omega_1\\
\mathbf{N}(z;x,t),&\quad z=u+\ii v\in\Omega_2\\
\mathbf{N}(z;x,t)\begin{pmatrix}1&E_3(u,v)\ee^{-2\ii t[\theta(u+\ii v;z_0)-\theta(z_0;z_0)]} \\0 & 1\end{pmatrix}^{-1},&\quad z=u+\ii v\in\Omega_3\\
\mathbf{N}(z;x,t)\begin{pmatrix}1&0\\E_4(u,v)\ee^{2\ii t[\theta(u+\ii v;z_0)-\theta(z_0;z_0)]} & 1\end{pmatrix},&\quad z=u+\ii v\in\Omega_4\\
\mathbf{N}(z;x,t),&\quad z=u+\ii v\in\Omega_5\\
\mathbf{N}(z;x,t)\begin{pmatrix}1&E_6(u,v)\ee^{-2\ii t[\theta(u+\ii v;z_0)-\theta(z_0;z_0)]} \\ 0 & 1\end{pmatrix},&\quad z=u+\ii v\in\Omega_6.
\end{cases}
\label{eq:O-N}
\end{equation}
Our notation $\mathbf{O}(u,v;x,t)$ reflects the viewpoint that unlike $\mathbf{N}(z;x,t)$, $z=u+\ii v$, $\mathbf{O}(u,v;x,t)$ is not a piecewise-analytic function in the complex plane due to the non-analytic extensions $E_j(u,v)$, $j=1,3,4,6$.
The exponential factors in \eqref{eq:O-N} all have modulus less than $1$ and decay exponentially to zero as $t\to+\infty$ pointwise in the interior of each of the indicated sectors, a fact that suggests that \eqref{eq:O-N} is a near-identity transformation in the limit $t\to+\infty$. We also have the following property.
\begin{lemma}[Relation between $\mathbf{N}$ and $\mathbf{O}$ for large $z\in\mathbb{C}$]  
Let $z_0\in\mathbb{R}$ be fixed, and suppose that $r\in H^1(\mathbb{R})$ and that there exists a constant $\rho<1$ such that $|r(z)|\le \rho$ holds for all $z\in\mathbb{R}$ (conditions that are true for $r\in H^{1,1}_1(\mathbb{R})$ as follows from $q_0\in H^{1,1}(\mathbb{R})$).  Then
$\mathbf{O}(u,v;x,t)=\mathbf{N}(u+\ii v;x,t)(\mathbb{I}+o(1))$ holds as $z=u+\ii v\to\infty$ where the decay of the error term is uniform with respect to direction in each sector $\Omega_j$, $j=1,\dots,6$.  
\label{lem:decay}
\end{lemma}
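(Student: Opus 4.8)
The plan is to show, sector by sector, that $\mathbf{O}$ differs from $\mathbf{N}$ only through a unipotent triangular factor whose single off-diagonal entry tends to zero as $z\to\infty$. In the sectors $\Omega_2$ and $\Omega_5$ there is nothing to prove, since $\mathbf{O}(u,v;x,t)=\mathbf{N}(u+\ii v;x,t)$ identically there. In each of the remaining four sectors $\Omega_j$, $j\in\{1,3,4,6\}$, the definition \eqref{eq:O-N} shows that $\mathbf{O}(u,v;x,t)=\mathbf{N}(u+\ii v;x,t)(\mathbb{I}+a_j(u,v;x,t)\mathbf{T}_j)$, where $\mathbf{T}_j$ is a fixed nilpotent matrix with a single entry equal to $1$ (in position $(2,1)$ for $j=1,4$ and $(1,2)$ for $j=3,6$), and where $a_j$ equals $\pm E_j(u,v)$ times the exponential $\ee^{\pm 2\ii t[\theta(u+\ii v;z_0)-\theta(z_0;z_0)]}$. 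Since $\mathbb{I}+a_j\mathbf{T}_j=\mathbb{I}+o(1)$ exactly when $a_j=o(1)$, it suffices to prove that $a_j(u,v;x,t)\to 0$ as $z=u+\ii v\to\infty$, uniformly in $\arg(z-z_0)$ over the closure of $\Omega_j$, for each fixed $x\in\mathbb{R}$ and $t>0$.

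Next I would record two elementary observations about the sectors. Writing $z-z_0=R\ee^{\ii\phi}$ and using $\theta(z;z_0)-\theta(z_0;z_0)=2(z-z_0)^2$, one checks the sign of $\sin(2\phi)$ in each of the four sectors and finds that the exponential appearing in $a_j$ has modulus exactly $\ee^{-4tR^2|\sin(2\phi)|}$, which is $\le 1$ for $t>0$. Moreover, each of $\Omega_1,\Omega_3,\Omega_4,\Omega_6$ abuts the real axis with opening angle $\tfrac14\pi$, so $|\cos\phi|\ge\tfrac{1}{\sqrt 2}$ throughout its closure; hence $|u-z_0|=R|\cos\phi|\ge R/\sqrt2$ and therefore $|u|\ge R/\sqrt2-|z_0|\to\infty$ as $R\to\infty$, uniformly in $\phi$ within the sector. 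This last point is the heart of the matter: the Gaussian decay of the exponential degenerates along the real-axis edge of each thin wedge, but precisely there $u$ is forced to be large, and there one gets decay from the vanishing of the reflection coefficient at spatial infinity instead.

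Now I would estimate $E_j$. Inspecting \eqref{eq:dbar-extensions}, each extension is $\cos(2\phi)$ times a product of one factor $f(u+\ii v;z_0)^{\pm 2}$, one factor $r(u)$ or $\overline{r(u)}$, and possibly one factor $(1-|r(u)|^2)^{-1}$, plus $(1-\cos(2\phi))$ times a constant of modulus at most $(1-\rho^2)^{-1}$. Using $|f(z;z_0)^{\pm 2}|\le(1-\rho^2)^{-2}$ from Lemma~\ref{lem:f} together with $|r|\le\rho$ and $1-|r|^2\ge 1-\rho^2$, one gets $|E_j(u,v)|\le C(|r(u)|+1-\cos(2\phi))$ for $j\in\{1,3,4,6\}$, with $C:=(1-\rho^2)^{-3}$. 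Combining this with the modulus bound on the exponential (and simply discarding that exponential in the first term) yields
\begin{equation}
|a_j(u,v;x,t)|\le C|r(u)| + C\bigl(1-\cos(2\phi)\bigr)\ee^{-4tR^2|\sin(2\phi)|}.
\nonumber
\end{equation}
For the first term, $r\in H^1(\mathbb{R})$ is continuous and tends to $0$ at infinity, so by the wedge geometry $C|r(u)|\le C\sup_{|w|\ge R/\sqrt2-|z_0|}|r(w)|\to 0$ as $R\to\infty$. For the second term, in all four sectors $2\phi$ stays in a range where $\cos(2\phi)\ge0$, so $1-\cos(2\phi)\le\sin^2(2\phi)$; writing $\sigma:=|\sin(2\phi)|\in[0,1]$ one then has $(1-\cos(2\phi))\ee^{-4tR^2|\sin(2\phi)|}\le\sigma^2\ee^{-4tR^2\sigma}\le\sup_{\tau\ge0}\tau^2\ee^{-4tR^2\tau}=\tfrac{1}{4\ee^2t^2R^4}$. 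Thus $|a_j(u,v;x,t)|\le C\sup_{|w|\ge R/\sqrt2-|z_0|}|r(w)|+C/(4\ee^2t^2R^4)$, a bound independent of $\phi$ and tending to $0$ as $R\to\infty$, which establishes the claimed direction-uniform decay and hence the lemma.

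I do not anticipate a genuine obstacle: every estimate here is at the level of single-variable calculus, and the structure is essentially dictated by Lemma~\ref{lem:f} and the definitions \eqref{eq:dbar-extensions}, \eqref{eq:O-N}. The one conceptual point worth flagging is that neither the oscillatory Gaussian factor alone nor the decay of $r$ at spatial infinity alone suffices—the former is useless at the real-axis boundary of each wedge and the latter is useless near the diagonal boundary—so the two must be played against each other, which is painless precisely because the wedges are narrow enough that $|u|\ge R/\sqrt2-|z_0|$ holds throughout.
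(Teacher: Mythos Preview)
Your proof is correct and follows essentially the same approach as the paper: both rely on the boundedness of $f(z;z_0)^{\pm 2}$ from Lemma~\ref{lem:f}, the vanishing of $r(u)$ as $|u|\to\infty$ (a consequence of $r\in H^1(\mathbb{R})$), and the fact that the exponential factor has modulus at most $1$. Your treatment is in fact more explicit than the paper's about the direction-uniformity, which the paper merely asserts after listing the two decay mechanisms; your split $|E_j|\le C(|r(u)|+1-\cos(2\phi))$ together with the calculus bound $(1-\cos(2\phi))\ee^{-4tR^2|\sin(2\phi)|}\le (4\ee^2t^2R^4)^{-1}$ cleanly handles the degeneration of each mechanism at the opposite edge of the wedge.
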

\begin{proof}
The exponential factors in \eqref{eq:O-N} also decay as $z=u+\ii v\to\infty$ provided that $v\to\infty$.  
Since $r,r'\in L^2(\mathbb{R})$ means that $(1+|\cdot|)\hat{r}(\cdot)$ is square-integrable where $\hat{r}$ denotes the Fourier transform of $r$, the Cauchy-Schwarz inequality implies that also $\hat{r}\in L^1(\mathbb{R})$.  Hence by the Riemann-Lebesgue Lemma, $r(u)$ is bounded, continuous, and tends to zero as $u\to\infty$.  As $1-|r(u)|^2\ge 1-\rho^2>0$, the same properties hold for $r(u)/(1-|r(u)|^2)$.  Since the hypotheses of Lemma~\ref{lem:f} hold, $f(u+\ii v;z_0)^{\pm 2}$ are bounded functions, so the desired result follows from using extension formul\ae\ \eqref{eq:dbar-extensions} in \eqref{eq:O-N}.
\end{proof}
Despite the non-analyticity of the extensions, the above proof shows also that each of the extensions $E_j(u,v)$, $j=1,3,4,6$, is continuous on the relevant sector and therefore $\mathbf{O}(u,v;x,t)$ is a piecewise-continuous function of $(u,v)\in\mathbb{R}^2$ with jump discontinuities across the sector boundaries.  We address these jump discontinuities next.

\subsection{The isomonodromy problem of Its}
\label{sec:PC}
Although $\mathbf{O}(u,v;x,t)$ is not analytic in the sectors shaded in Figure~\ref{fig:RHP-deform} for essentially the same reason that the double integral error term in \eqref{eq:q-Stokes} does not vanish identically, the fact that the extensions $E_j(u,v)$, $j=1,3,4,6$, evaluate to constants on the diagonals:
\begin{equation}
\begin{split}
E_1(u-z_0,u)=|r(z_0)|\quad&\text{and}\quad E_6(u-z_0,-u)=-|r(z_0)|,\quad u>z_0,\\
E_3(u-z_0,-u)=-\frac{|r(z_0)|}{1-|r(z_0)|^2}\quad&\text{and}\quad E_4(u-z_0,u)=\frac{|r(z_0)|}{1-|r(z_0)|^2},\quad u<z_0,
\end{split}
\end{equation}
implies that if we introduce the recentered and rescaled independent variable $\zeta:=2t^{1/2}(z-z_0)$, the jump conditions satisfied by $\mathbf{O}(u,v;x,t)$ across the sector boundaries are exactly the same as those satisfied by the matrix function $\mathbf{P}(\zeta;|r(z_0)|)$ solving the following Riemann-Hilbert problem.
\begin{rhp}
Let $m\in [0,1)$ be a parameter, and seek a $2\times 2$ matrix function $\mathbf{P}=\mathbf{P}(\zeta)=\mathbf{P}(\zeta;m)$ with the following properties:
\begin{itemize}
\item[]\textit{\textbf{Analyticity:}} $\mathbf{P}(\zeta)$ is an analytic function of $\zeta$ in the sectors $|\arg(\zeta)|<\tfrac{1}{4}\pi$, $\tfrac{1}{4}\pi<\pm\arg(\zeta)<\tfrac{3}{4}\pi$, and $\tfrac{3}{4}\pi<\pm\arg(\zeta)<\pi$.  It admits a continuous extension from each of these five sectors to its boundary.
\item[]\textit{\textbf{Jump conditions:}} Denoting by $\mathbf{P}_+(\zeta)$ (resp., $\mathbf{P}_-(\zeta)$) the boundary value taken on any one of the rays of the jump contour $\Sigma_\mathbf{P}$ from the left (resp., right) according to the orientation shown in Figure~\ref{fig:PC-jumps}, the boundary values are related by $\mathbf{P}_+(\zeta;m)=\mathbf{P}_-(\zeta;m)\mathbf{V}_\mathbf{P}(\zeta;m)$, where the jump matrix $\mathbf{V}_\mathbf{P}(\zeta;m)$ is defined on the five rays of $\Sigma_\mathbf{P}$ by
\begin{figure}[h]
\begin{center}
\includegraphics{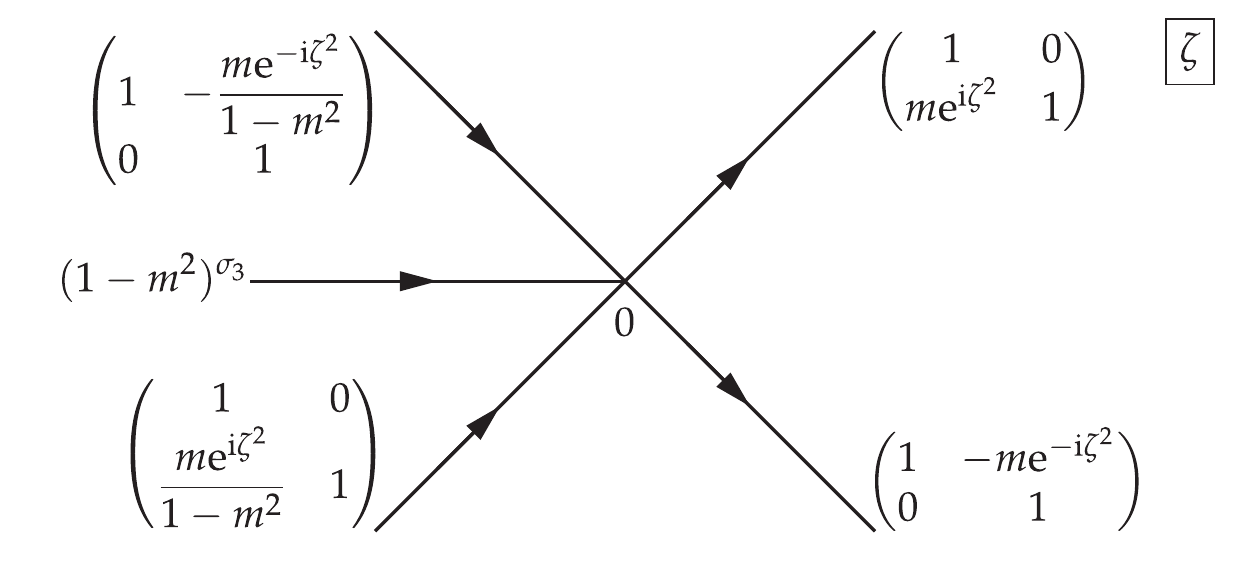}
\end{center}
\caption{The jump contour $\Sigma_\mathbf{P}$ and jump matrix $\mathbf{V}_\mathbf{P}(\zeta;m)$ for Riemann-Hilbert Problem~\ref{rhp:PC}.}
\label{fig:PC-jumps}
\end{figure}
\begin{equation}
\mathbf{V}_\mathbf{P}(\zeta;m):=\begin{cases}
\displaystyle \begin{pmatrix}1&0\\m\ee^{\ii\zeta^2} & 1\end{pmatrix},&\quad \arg(\zeta)=\frac{1}{4}\pi\\
\displaystyle \begin{pmatrix}1&-m\ee^{-\ii\zeta^2}\\0 & 1\end{pmatrix},&\quad\arg(\zeta)=-\frac{1}{4}\pi\\
\displaystyle \begin{pmatrix}1 & \displaystyle -\frac{m\ee^{-\ii\zeta^2}}{1-m^2}\\0 & 1\end{pmatrix},&\quad\arg(\zeta)=\frac{3}{4}\pi\\
\displaystyle\begin{pmatrix}1 & 0\\\displaystyle\frac{m\ee^{\ii\zeta^2}}{1-m^2}&1\end{pmatrix},&\quad\arg(\zeta)=-\frac{3}{4}\pi\\
(1-m^2)^{\sigma_3},&\quad\arg(-\zeta)=0.
\end{cases}
\label{eq:P-jumps}
\end{equation}
\item[]\textit{\textbf{Normalization:}} $\mathbf{P}(\zeta;m)\zeta^{-\ln(1-m^2)\sigma_3/(2\pi\ii)}\to\mathbb{I}$ as $\zeta\to\infty$.
\end{itemize}
\label{rhp:PC}
\end{rhp}
This Riemann-Hilbert problem is essentially the isomonodromy problem identified by Its \cite{Its81}, and it is the analogue in the nonlinear setting of the Gaussian integral that is the leading term of the stationary phase expansion \eqref{eq:q-Stokes} in the linear case.  Although the jump conditions for $\mathbf{O}(u,v;x,t)$ correspond exactly to those of $\mathbf{P}(\zeta;|r(z_0)|)$, the scaling $z\mapsto \zeta=2t^{1/2}(z-z_0)$ introduces an extra factor into the asymptotics as $z\to\infty$; the fact of the matter is that
the matrix $(2t^{1/2})^{\ii\nu(z_0)\sigma_3}\mathbf{O}(u,v;x,t)$ satisfies the normalization condition of $\mathbf{P}(\zeta;|r(z_0)|)$, and the constant pre-factor has no effect on the jump conditions.  Hence in Section~\ref{sec:dbar} below we shall use the latter as a parametrix for the former. 

However, we first develop the explicit solution of Riemann-Hilbert Problem~\ref{rhp:PC}.  The first step is to consider the related unknown $\mathbf{U}(\zeta;m):=\mathbf{P}(\zeta;m)\ee^{-\ii\zeta^2\sigma_3/2}$ and observe that from the conditions of Riemann-Hilbert Problem~\ref{rhp:PC} that $\mathbf{U}(\zeta;m)$ is analytic exactly in the same five sectors where $\mathbf{P}(\zeta;m)$ is, and that it satisfies jump conditions of exactly the form \eqref{eq:P-jumps} except that the factors $\ee^{\pm\ii\zeta^2}$ are everywhere replaced by $1$; in other words, the jump matrix for $\mathbf{U}(\zeta;m)$ on each jump ray is constant along the ray.  It follows that the $\zeta$-derivative $\mathbf{U}'(\zeta;m)$ satisfies the same ``raywise constant'' jump conditions as does $\mathbf{U}(\zeta;m)$ itself.  Then, since it is easy to prove by Liouville's theorem that any solution $\mathbf{P}(\zeta;m)$ of Riemann-Hilbert Problem~\ref{rhp:PC} has unit determinant, it follows that $\mathbf{U}(\zeta;m)$ is invertible and a calculation shows that the function $\mathbf{U}'(\zeta;m)\mathbf{U}(\zeta;m)^{-1}$ is continuous and hence by Morera's theorem analytic in the whole $\zeta$-plane possibly excepting $\zeta=0$.  We will assume analyticity at the origin as well and show later that this is consistent.  As an entire function of $\zeta$, the product $\mathbf{U}'(\zeta;m)\mathbf{U}(\zeta;m)^{-1}$ is potentially determined by its asymptotic behavior as $\zeta\to\infty$.  Assuming further that the normalization condition in Riemann-Hilbert Problem~\ref{rhp:PC} means both that for some matrix coefficient $\mathbf{P}_1(m)$ to be determined,
\begin{equation}
\begin{split}
\mathbf{P}(\zeta;m)&=\left(\mathbb{I} + \zeta^{-1}\mathbf{P}_1(m) + \mathcal{O}(\zeta^{-2})\right)\zeta^{\ln(1-m^2)\sigma_3/(2\pi\ii)}\quad\text{and}\\
\mathbf{P}'(\zeta;m)&=\left(\frac{\ln(1-m^2)}{2\pi\ii}\zeta^{-1}\sigma_3 +\mathcal{O}(\zeta^{-2})\right)\zeta^{\ln(1-m^2)\sigma_3/(2\pi\ii)}
\end{split}
\label{eq:P-expansion}
\end{equation}
hold as $\zeta\to\infty$, such as would arise from term-by-term differentiation, it follows also that
\begin{equation}
\begin{split}
\mathbf{U}(\zeta;m)&=\left(\mathbb{I}+\zeta^{-1}\mathbf{P}_1(m)+\mathcal{O}(\zeta^{-2})\right)\zeta^{\ln(1-m^2)\sigma_3/(2\pi\ii)}\ee^{-\ii\zeta^2\sigma_3/2}\quad\text{and}\\
\mathbf{U}'(\zeta;m)&=\left(-\ii\zeta\sigma_3 -\ii\mathbf{P}_1(m)\sigma_3 +\mathcal{O}(\zeta^{-1})\right)\zeta^{\ln(1-m^2)\sigma_3/(2\pi\ii)}\ee^{-\ii\zeta^2\sigma_3/2}
\end{split}
\label{eq:U-expansion}
\end{equation}
as $\zeta\to\infty$.  Therefore the entire function is determined by Liouville's theorem to be a linear polynomial:
\begin{equation}
\mathbf{U}'(\zeta;m)\mathbf{U}(\zeta;m)^{-1}=-\ii\zeta\sigma_3 +\ii[\sigma_3,\mathbf{P}_1(m)],
\end{equation}
where $[\mathbf{A},\mathbf{B}]:=\mathbf{AB}-\mathbf{BA}$ is the matrix commutator.  In other words, $\mathbf{U}(\zeta;m)$ satisfies the first-order system of linear differential equations:
\begin{equation}
\frac{\dd\mathbf{U}}{\dd\zeta}(\zeta;m)=\begin{pmatrix}-\ii\zeta & 2\ii P_{1,12}(m)\\-2\ii P_{1,21}(m) & \ii\zeta\end{pmatrix}\mathbf{U}(\zeta;m).
\label{eq:PC-system}
\end{equation}
Now, another easy consequence of Liouville's theorem is that there is at most one solution of Riemann-Hilbert Problem~\ref{rhp:PC}.  Using the fact that $m\in [0,1)$, it is not difficult to show that if $\mathbf{P}(\zeta;m)$ is a solution of Riemann-Hilbert Problem~\ref{rhp:PC}, then so is 
\begin{equation}
\sigma_1\overline{\mathbf{P}(\overline{\zeta};m)}\sigma_1,\quad\text{where}\quad
\sigma_1:=\begin{pmatrix}0&1\\1&0\end{pmatrix},
\end{equation}
so by uniqueness it follows that $\mathbf{P}(\zeta;m)=\sigma_1\overline{\mathbf{P}(\overline{\zeta};m)}\sigma_1$.  Combining this symmetry with the first expansion in \eqref{eq:P-expansion} shows that $P_{1,21}(m)=\overline{P_{1,12}(m)}$, so the differential equations can be written in the form
\begin{equation}
\frac{\dd\mathbf{U}}{\dd\zeta}(\zeta;m)=\begin{pmatrix}-\ii\zeta & \beta\\\overline{\beta} & \ii\zeta\end{pmatrix}\mathbf{U}(\zeta;m),\quad\beta=\beta(m):=2\ii P_{1,12}(m).
\label{eq:PC-system-2}
\end{equation}
The constant $\beta\in\mathbb{C}$ is unknown, but if it is considered as a parameter, then eliminating the second row shows that the elements $U_{1j}$, $j=1,2$, of the first row satisfy Weber's equation for parabolic cylinder functions in the form:
\begin{equation}
\frac{\dd^2 U_{1j}}{\dd y^2}-\left(\frac{1}{4}y^2+a\right)U_{1j}=0,\quad a:=\frac{1}{2}(1+\ii|\beta|^2),\quad y:=\sqrt{2}\ee^{-\ii\pi/4}\zeta,\quad j=1,2.
\label{eq:Weber}
\end{equation}
The solutions of this equation are well-documented in the Digital Library of Mathematical Functions \cite[\S12]{DLMF}.  The equation \eqref{eq:Weber} has  particular solutions denoted $U(a,\pm y)$ and $U(-a,\pm\ii y)$, where $U(\cdot,\cdot)$ is a special function\footnote{In many works on long-time asymptotics for the Cauchy problem \eqref{eq:NLSEQ}--\eqref{eq:IC} written before the Digital Library of Mathematical Functions was freely available (\emph{e.g.}, \cite{Deif5,DiengM08}), the solution of Riemann-Hilbert Problem~\ref{rhp:PC} was developed in terms of the related function $D_\nu(y):=U(-\tfrac{1}{2}-\nu,y)$. Since most formul\ae\ in \cite[\S12]{DLMF} are phrased in terms of $U(\cdot,\cdot)$, we favor the latter.} with well-known integral representations, asymptotic expansions, and connection formul\ae.

The second step is to represent the elements $U_{1j}$ as linear combinations of a fundamental pair of so-called numerically satisfactory solutions specially adapted to each of the five sectors of analyticity for Riemann-Hilbert Problem~\ref{rhp:PC}.  Thus, we write
\begin{equation}
U_{1j}(\zeta;m)=\begin{cases}
\beta A_j^{(0)}U(a,y) + \beta B_j^{(0)}U(-a,\ii y),&\quad|\arg(\zeta)|<\frac{1}{4}\pi,\\
\beta A_j^{(1)}U(a,y) + \beta B_j^{(1)}U(-a,-\ii y),&\quad\frac{1}{4}\pi<\arg(\zeta)<\frac{3}{4}\pi,\\
\beta A_j^{(-1)}U(a,-y)+\beta B_j^{(-1)}U(-a,\ii y),&\quad-\frac{3}{4}\pi<\arg(\zeta)<-\frac{1}{4}\pi,\\
\beta A_j^{(2)}U(a,-y)+\beta B_j^{(2)}U(-a,-\ii y),&\quad \frac{3}{4}\pi<\arg(\zeta)<\pi,\\
\beta A_j^{(-2)}U(a,-y)+\beta B_j^{(-2)}U(-a,-\ii y),&\quad -\pi<\arg(\zeta)<-\frac{3}{4}\pi.
\end{cases}
\label{eq:U-first-row}
\end{equation}
and then using the first row of \eqref{eq:PC-system-2} along with identities allowing the elimination of derivatives of $U$ \cite[Eqs.\@ 12.8.2--12.8.3]{DLMF} we get the following representation of the elements of the second row of $\mathbf{U}(\zeta;m)$:
\begin{equation}
U_{2j}(\zeta;m)=\sqrt{2}\ee^{-\ii\pi/4}\begin{cases}
-A_j^{(0)}U(a-1,y)+\ii(a-\tfrac{1}{2})B_j^{(0)}U(1-a,\ii y),&\quad |\arg(\zeta)|<\frac{1}{4}\pi,\\
-A_j^{(1)}U(a-1,y)-\ii(a-\tfrac{1}{2})B_j^{(1)}U(1-a,-\ii y),&\quad \frac{1}{4}\pi<\arg(\zeta)<\frac{3}{4}\pi,\\
A_j^{(-1)}U(a-1,-y)+\ii(a-\tfrac{1}{2})B_j^{(-1)}U(1-a,\ii y),&\quad -\frac{3}{4}\pi<\arg(\zeta)<-\frac{1}{4}\pi,\\
A_j^{(2)}U(a-1,-y)-\ii (a-\tfrac{1}{2})B_j^{(2)}U(1-a,-\ii y),&\quad \frac{3}{4}\pi<\arg(\zeta)<\pi,\\
A_j^{(-2)}U(a-1,-y)-\ii(a-\tfrac{1}{2})B_j^{(-2)}U(1-a,-\ii y),&\quad -\pi<\arg(\zeta)<-\frac{3}{4}\pi.
\end{cases}
\label{eq:U-second-row}
\end{equation}

Finally, we determine the coefficients $A_j^{(i)}$ and $B_j^{(i)}$ for $j=1,2$ and $i=0,\pm 1,\pm 2$, as well as the value of $\beta=\beta(m)$ so that all of the conditions of Riemann-Hilbert Problem~\ref{rhp:PC} are satisfied by $\mathbf{P}(\zeta;m)=\mathbf{U}(\zeta;m)\ee^{\ii\zeta\sigma_3/2}$.
The advantage of using numerically satisfactory fundamental pairs is that the asymptotic expansion \cite[Eq.\@ 12.9.1]{DLMF}
\begin{equation}
U(a,y)\sim\ee^{-\tfrac{1}{4}y^2}y^{-a-\tfrac{1}{2}}\sum_{k=0}^\infty (-1)^k\frac{\left(\tfrac{1}{2}+a\right)_{2k}}{k!(2y^2)^k},\quad y\to\infty,\quad |\arg(y)|<\frac{3}{4}\pi
\label{eq:PC-expansion}
\end{equation}
can be used to determine from \eqref{eq:U-first-row}--\eqref{eq:U-second-row} the asymptotic behavior of $\mathbf{U}(\zeta;m)$ in each sector for the purposes of comparison with the first formula in \eqref{eq:U-expansion}.  This immediately shows that for consistency it is necessary to take $A_1^{(i)}=0$ and $B_2^{(i)}=0$ for $i=0,\pm 1,\pm 2$.  Next, it is useful to consider the trivial jump conditions for the first column of $\mathbf{U}(\zeta;m)$ (across $\arg(\zeta)=-\tfrac{1}{4}\pi$ and $\arg(\zeta)=\tfrac{3}{4}\pi$) and for the second column of $\mathbf{U}(\zeta;m)$ (across $\arg(\zeta)=\tfrac{1}{4}\pi$ and $\arg(\zeta)=-\tfrac{3}{4}\pi$).  These imply the identities $B_1^{(0)}=B_1^{(-1)}$, $B_1^{(1)}=B_1^{(2)}$ (from matching the first column) and $A_2^{(0)}=A_2^{(1)}$, $A_2^{(-2)}=A_2^{(-1)}$ (from matching the second column).  The diagonal jump condition satisfied by $\mathbf{U}(\zeta;m)$ across the negative real axis then yields the additional identities $B_1^{(-2)}=(1-m^2)^{-1}B_1^{(2)}$ and $A_2^{(2)}=(1-m^2)^{-1}A_2^{(-2)}$.  With this information, we have found that $\mathbf{U}(\zeta;m)$ necessarily has the form
\begin{equation}
\mathbf{U}(\zeta;m)=\begin{pmatrix}
\beta B_1^{(0)}U(-a,\ii y) & \beta A_2^{(0)}U(a,y)\\
\sqrt{2}\ee^{\ii\pi/4}(a-\tfrac{1}{2})B_1^{(0)}U(1-a,\ii y) & \sqrt{2}\ee^{3\pi\ii/4}A_2^{(0)}U(a-1,y)
\end{pmatrix},\quad |\arg(\zeta)|<\frac{1}{4}\pi,
\label{eq:U-sector-0}
\end{equation}
\begin{equation}
\mathbf{U}(\zeta;m)=\begin{pmatrix}
\beta B_1^{(1)}U(-a,-\ii y) & \beta A_2^{(0)}U(a,y)\\
\sqrt{2}\ee^{-3\pi\ii/4}(a-\tfrac{1}{2})B_1^{(1)}U(1-a,-\ii y) & \sqrt{2}\ee^{3\pi\ii/4}A_2^{(0)}U(a-1,y)
\end{pmatrix},\quad \frac{1}{4}\pi<\arg(\zeta)<\frac{3}{4}\pi,
\end{equation}
\begin{multline}
\mathbf{U}(\zeta;m)=\begin{pmatrix}
\beta B_1^{(0)} U(-a,\ii y) & \beta A_2^{(-1)}U(a,-y)\\
\sqrt{2}\ee^{\ii\pi/4}(a-\tfrac{1}{2})B_1^{(0)}U(1-a,\ii y) & \sqrt{2}\ee^{-\ii\pi/4}A_2^{(-1)}U(a-1,-y)
\end{pmatrix},\\ -\frac{3}{4}\pi<\arg(\zeta)<-\frac{1}{4}\pi,
\end{multline}
\begin{multline}
\mathbf{U}(\zeta;m)=\begin{pmatrix}
\beta B_1^{(1)} U(-a,-\ii y) & \beta(1-m^2)^{-1}A_2^{(-1)}U(a,-y)\\
\sqrt{2}\ee^{-3\pi\ii/4}(a-\tfrac{1}{2})B_1^{(1)}U(1-a,-\ii y) &
\sqrt{2}\ee^{-\ii\pi/4}(1-m^2)^{-1}A_2^{(-1)}U(a-1,-y)
\end{pmatrix},\\ \frac{3}{4}\pi<\arg(\zeta)<\pi,
\end{multline}
and
\begin{multline}
\mathbf{U}(\zeta;m)=\begin{pmatrix}
\beta (1-m^2)^{-1}B_1^{(1)}U(-a,-\ii y) & \beta A_2^{(-1)}U(a,-y)\\
\sqrt{2}\ee^{-3\pi\ii/4}(a-\tfrac{1}{2})(1-m^2)^{-1}B_1^{(1)}U(1-a,-\ii y) & 
\sqrt{2}\ee^{-\ii\pi/4}A_2^{(-1)}U(a-1,-y)
\end{pmatrix},\\
-\pi<\arg(\zeta)<-\frac{3}{4}\pi.
\label{eq:U-sector-minus-2}
\end{multline}
Appealing again to \eqref{eq:PC-expansion} now shows that $\mathbf{U}(\zeta;m)$ agrees with the first formula in \eqref{eq:U-expansion} up to the leading term only if the parameter $a$ in Weber's equation \eqref{eq:Weber} satisfies
\begin{equation}
a-\frac{1}{2}=\frac{1}{2\pi\ii}\ln(1-m^2)\quad\implies\quad |\beta|^2=-\frac{1}{\pi}\ln(1-m^2)>0,
\label{eq:beta-mod-squared}
\end{equation}
and the remaining
constants $A_2^{(0)}$, $A_2^{(-1)}$, $B_1^{(0)}$, and $B_1^{(1)}$, are given in terms of $\beta$ by
\begin{equation}
\begin{split}
B_1^{(0)}&=\beta^{-1}(1-m^2)^{-1/8}\exp\left(\ii\frac{1}{4\pi}\ln(2)\ln(1-m^2)\right)\\
A_2^{(0)}&=\frac{1}{\sqrt{2}}(1-m^2)^{-1/8}\ee^{-3\pi\ii/4}\exp\left(-\ii\frac{1}{4\pi}\ln(2)\ln(1-m^2)\right)\\
B_1^{(1)}&=\beta^{-1}(1-m^2)^{3/8}\exp\left(\ii\frac{1}{4\pi}\ln(2)\ln(1-m^2)\right)\\
A_{2}^{(-1)}&=\frac{1}{\sqrt{2}}(1-m^2)^{3/8}\ee^{\ii\pi/4}\exp\left(-\ii\frac{1}{4\pi}\ln(2)\ln(1-m^2)\right).
\end{split}
\label{eq:remaining-coefficients}
\end{equation}
Only $\arg(\beta)$ remains to be determined, and for this we recall the nontrivial jump conditions for the first (second) column of $\mathbf{U}(\zeta;m)$ across the rays $\arg(\zeta)=\tfrac{1}{4}\pi,-\tfrac{3}{4}\pi$ (the rays $\arg(\zeta)=-\tfrac{1}{4}\pi,\tfrac{3}{4}\pi$).  Actually all four of these jump conditions contain equivalent information due to the fact that the cyclic product of the jump matrices in Riemann-Hilbert Problem~\ref{rhp:PC} about the origin is the identity, so we just examine the transition of the first column across the ray $\arg(\zeta)=\tfrac{1}{4}\pi$ implied by the jump conditions in Riemann-Hilbert Problem~\ref{rhp:PC}.  Using all available information, the jump condition matches the connection formula \cite[Eq.\@ 12.2.18]{DLMF} if and only if
\begin{equation}
\arg(\beta)=\frac{\pi}{4}+\frac{1}{2\pi}\ln(2)\ln(1-m^2)-\arg\left(\Gamma\left(\ii\frac{1}{2\pi}\ln(1-m^2)\right)\right).
\label{eq:arg-beta}
\end{equation}
Combining this with \eqref{eq:beta-mod-squared} determines $\beta=\beta(m)$ and then using \eqref{eq:remaining-coefficients} in \eqref{eq:U-sector-0}--\eqref{eq:U-sector-minus-2} fully determines
$\mathbf{U}(\zeta;m)$ and hence also $\mathbf{P}(\zeta;m)=\mathbf{U}(\zeta;m)\ee^{\ii\zeta^2\sigma_3/2}$.
This completes the construction of the necessarily unique solution of Riemann-Hilbert Problem~\ref{rhp:PC}.  One can easily check directly that $\mathbf{U}'(\zeta;m)\mathbf{U}(\zeta;m)^{-1}$ is analytic at $\zeta=0$, and using \eqref{eq:PC-expansion} (which is known to be a formally differentiable expansion) one confirms the asymptotic expansions \eqref{eq:P-expansion}--\eqref{eq:U-expansion}, justifying after the fact all assumptions made to arrive at the explicit solution.  

We note that for each $m\in [0,1)$, $\mathbf{P}(\zeta;m)$ is uniformly bounded with respect to $\zeta\in\mathbb{C}$, since it is locally bounded and the normalization factor in the asymptotics as $\zeta\to\infty$ satisfies 
\begin{equation}
(1-m^2)^{1/2}<|\zeta^{-\ln(1-m^2)/(2\pi \ii)}|<(1-m^2)^{-1/2},\quad\arg(\zeta)\in (-\pi,\pi).  
\end{equation}
Since $\det(\mathbf{P}(\zeta;m))=1$, the same holds for $\mathbf{P}(\zeta;m)^{-1}$.  Moreover, it is not difficult to see that if $\|\cdot\|$ is a matrix norm, then $\sup_{\zeta\in\mathbb{C}\setminus\Sigma_\mathbf{P}}\|\mathbf{P}(\zeta;m)\|$ is a continuous function of $m\in [0,1)$.  Therefore the estimates on $\mathbf{P}(\zeta;m)$ and $\mathbf{P}(\zeta;m)^{-1}$ hold uniformly with respect to $m\in [0,\rho]$ for any $\rho<1$.  

\subsection{The equivalent $\dbar$ problem and its solution for large $t$}
\label{sec:dbar}
The next part of the proof of Theorem~\ref{mainresult} is the nonlinear analogue of the estimation of the error $\mathcal{E}(x,t)$ in the stationary phase formula \eqref{eq:stationary-phase} by double integrals in the $z$-plane.  Here instead of a double integral we will have a double-integral equation arising from a $\dbar$-problem.  To arrive at this problem, we simply define a matrix function $\mathbf{E}(u,v;x,t)$ by comparing the ``open lenses'' matrix $(2t^{1/2})^{\ii\nu(z_0)\sigma_3}\mathbf{O}(u,v;x,t)$ with its parametrix $\mathbf{P}(2t^{1/2}(z-z_0);|r(z_0)|)$:
\begin{equation}
\mathbf{E}(u,v;x,t):=(2t^{1/2})^{\ii\nu(z_0)\sigma_3}\mathbf{O}(u,v;x,t)\mathbf{P}(2t^{1/2}(u+\ii v-z_0);|r(z_0)|)^{-1}.
\label{eq:E-define}
\end{equation}
We claim that $\mathbf{E}(u,v;x,t)$ satisfies the following problem.
\begin{dbarproblem}
Let $(x,t)\in\mathbb{R}^2$ be parameters.  Find a $2\times 2$ matrix function $\mathbf{E}=\mathbf{E}(u,v)=\mathbf{E}(u,v;x,t)$, $(u,v)\in\mathbb{R}^2$ with the following properties:
\begin{itemize}
\item[]\textit{\textbf{Continuity:}} $\mathbf{E}$ is a continuous function of $(u,v)\in\mathbb{R}^2$.
\item[]\textit{\textbf{Nonanalyticity:}} $\mathbf{E}$ is a (weak) solution of the partial differential equation $\dbar\mathbf{E}(u,v)=\mathbf{E}(u,v)\mathbf{W}(u,v)$, where $\mathbf{W}(u,v)=\mathbf{W}(u,v;x,t)$ is defined by 
\begin{equation}
\mathbf{W}(u,v;x,t):=\mathbf{P}(2t^{1/2}(u+\ii v-z_0);|r(z_0)|)\boldsymbol{\Delta}(u,v;x,t)\mathbf{P}(2t^{1/2}(u+\ii v-z_0);|r(z_0)|)^{-1},
\label{eq:W-def}
\end{equation}
and
\begin{equation}
\boldsymbol{\Delta}(u,v;x,t):=\begin{cases}
\begin{pmatrix}0&0\\-\dbar E_1(u,v)\ee^{2\ii t(\theta(u+\ii v;z_0)-\theta(z_0;z_0))} &0\end{pmatrix}
,&\quad u+\ii v\in\Omega_1\\
\mathbf{0},&\quad u+\ii v\in\Omega_2\\
\begin{pmatrix}0&-\dbar E_3(u,v)\ee^{-2\ii t(\theta(u+\ii v;z_0)-\theta(z_0;z_0))}\\ 0&0\end{pmatrix}
,&\quad u+\ii v\in\Omega_3\\
\begin{pmatrix}0&0\\\dbar E_4(u,v)\ee^{2\ii t(\theta(u+\ii v;z_0)-\theta(z_0;z_0))} &0\end{pmatrix}
,&\quad u+\ii v\in\Omega_4\\
\mathbf{0},&\quad u+\ii v\in\Omega_5\\
\begin{pmatrix}0&\dbar E_6(u,v)\ee^{-2\ii t(\theta(u+\ii v;z_0)-\theta(z_0;z_0))}\\ 0&0\end{pmatrix}
,&\quad u+\ii v\in\Omega_6.
\end{cases}
\label{eq:Delta-def}
\end{equation}
Note that $\mathbf{W}(u,v;x,t)$ has jump discontinuities across the sector boundaries in general.
\item[]\textit{\textbf{Normalization:}} $\mathbf{E}(u,v)\to\mathbb{I}$ as $(u,v)\to\infty$.
\end{itemize}
\label{dbp:E}
\end{dbarproblem}
To show the continuity, first note that in each of the six sectors $\Omega_j$, $j=1,\dots,6$, $\mathbf{E}(u,v;x,t)$ is continuous as a function of $(u,v)$ up to the sector boundary.  Indeed, the first factor in \eqref{eq:E-define} is independent of $(u,v)$, and the second factor in \eqref{eq:E-define} has the claimed continuity because this is a property of the solution $\mathbf{N}(u+\ii v;x,t)$ of Riemann-Hilbert Problem~\ref{rhp:N} and of the change-of-variables formula \eqref{eq:O-N}.  Finally, $\mathbf{P}(\zeta;m)$ has unit determinant and its explicit formula in terms of parabolic cylinder functions shows that its restriction to each sector is an entire function of $\zeta$, which guarantees the asserted continuity of the third factor in \eqref{eq:E-define}.  Moreover, the matrices $(2t^{1/2})^{\ii\nu(z_0)\sigma_3}\mathbf{O}(u,v;x,t)$ and $\mathbf{P}(2t^{1/2}(u+\ii v-z_0);|r(z_0)|)$ satisfy exactly the same jump conditions across the six rays that form the common boundaries of neighboring sectors, from which it follows that $\mathbf{E}_+(u,v;x,t)=\mathbf{E}_-(u,v;x,t)$ holds across each of these rays and therefore $\mathbf{E}(u,v;x,t)$ may be regarded as a continuous function of $(u,v)\in\mathbb{R}^2$.  

To show that $\dbar\mathbf{E}=\mathbf{E}\mathbf{W}$ holds, one simply differentiates $\mathbf{E}(u,v;x,t)$ in each of the six sectors, using the fact that $\mathbf{O}(u,v;x,t)$ is related to $\mathbf{N}(u+\ii v;x,t)$ explicitly by \eqref{eq:O-N} and that both $\mathbf{N}(u+\ii v;x,t)$ and the unit-determinant matrix function $\mathbf{P}(2t^{1/2}(u+\ii v-z_0);|r(z_0)|)$ are analytic functions of $u+\ii v$ in each sector, and hence are annihilated by $\dbar$.  The region of non-analyticity of $\mathbf{E}$ is therefore the union of shaded sectors shown in Figure~\ref{fig:RHP-deform}.

Finally to show the normalization condition, we recall Lemma~\ref{lem:decay}.  Therefore, comparing the normalization conditions of Riemann-Hilbert Problem~\ref{rhp:N} for $\mathbf{N}(z;x,t)$ and of Riemann-Hilbert Problem~\ref{rhp:PC} for $\mathbf{P}(\zeta;m)$ shows that $\mathbf{E}(u,v;x,t)\to\mathbb{I}$ as $(u,v)\to\infty$ in $\mathbb{R}^2$.

The rest of this section is devoted to the proof of the following result.
\begin{prop}
Suppose that $r\in H^1(\mathbb{R})$ with $|r(z)|\le\rho$ for some $\rho<1$.  If $t>0$ is sufficiently large, then for all $x\in\mathbb{R}$ there exists a unique solution $\mathbf{E}(\cdot,\cdot;x,t)\in L^\infty(\mathbb{R}^2)$ of $\dbar$-Problem~\ref{dbp:E} with the property that 
\begin{equation}
\mathbf{E}_1(x,t):=\mathop{\lim_{(u,v)\to\infty}}_{u=0}(u+\ii v)\left[\mathbf{E}(u,v;x,t)-\mathbb{I}\right]
\label{eq:E1-define}
\end{equation}
exists and satisfies 
\begin{equation}
\sup_{x\in\mathbb{R}}\|\mathbf{E}_1(x,t)\|=\mathcal{O}(t^{-3/4}),\quad t\to +\infty.
\label{eq:E1-decay}
\end{equation}
\label{prop:dbar}
\end{prop}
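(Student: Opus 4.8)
The plan is to convert $\dbar$-Problem~\ref{dbp:E} into a Fredholm-type integral equation, to show that the associated integral operator is a contraction on $L^\infty(\mathbb{R}^2)$ once $t$ is large (which simultaneously gives existence, uniqueness, and $\|\mathbf{E}\|_{L^\infty}=\mathcal{O}(1)$), and finally to read $\mathbf{E}_1$ off the integral equation and bound it by exactly the elementary double integrals $I^{\pm}$, $J^{\pm}$ of Section~\ref{sec:linear}. Since $\dbar\mathbf{E}=\mathbf{E}\mathbf{W}$ with $\mathbf{E}\to\mathbb{I}$ at infinity, the solid Cauchy transform gives the equivalent equation
\begin{equation}
\mathbf{E}(u,v)=\mathbb{I}+\frac{1}{\pi}\iint_{\mathbb{R}^2}\frac{\mathbf{E}(u',v')\mathbf{W}(u',v')}{(u-u')+\ii(v-v')}\,\dd A(u',v')=:\mathbb{I}+\left(\mathcal{C}_\mathbf{W}\mathbf{E}\right)(u,v).
\end{equation}
Two pointwise observations will be used throughout. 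First, by \eqref{eq:W-def} and the uniform bounds on $\mathbf{P}(\zeta;m)^{\pm1}$ over $\zeta\in\mathbb{C}$ and $m\in[0,\rho]$ established at the end of Section~\ref{sec:PC}, one has $\|\mathbf{W}(u,v)\|\le C(\rho)\|\boldsymbol{\Delta}(u,v)\|$. Second, since $\theta(z;z_0)-\theta(z_0;z_0)=2(z-z_0)^2$, the oscillatory exponential appearing in \eqref{eq:Delta-def} has modulus precisely $\ee^{-8t|(u-z_0)v|}\le1$ on each of $\Omega_1,\Omega_3,\Omega_4,\Omega_6$, while $\mathbf{W}\equiv\mathbf{0}$ on $\Omega_2\cup\Omega_5$. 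I would then record the nonlinear analogue of \eqref{eq:dbarE-2}: using $\dbar f(u+\ii v;z_0)^{\pm2}\equiv0$ (Lemma~\ref{lem:f}) and the polar form \eqref{eq:dbar-polar} of $\dbar$, the same computation that led to \eqref{eq:dbarE-1}--\eqref{eq:dbarE-2} gives, for $j\in\{1,3,4,6\}$ and $z=u+\ii v\in\Omega_j$,
\begin{equation}
|\dbar E_j(u,v)|\le C_1|r'(u)|+\frac{C_2}{\left[(u-z_0)^2+v^2\right]^{1/4}},
\end{equation}
with $C_1,C_2$ depending only on $\rho$ and $\|r'\|_{L^2(\mathbb{R})}$; here the singular factor $|z-z_0|^{-1}$ produced by $\dbar\cos(2\arg(z-z_0))$ is tamed by the $\mathcal{O}(|u-z_0|^{1/2})$ bound on the difference of the non-oscillatory coefficient evaluated at $u$ and at $z_0$ (fundamental theorem of calculus and Cauchy--Schwarz, as in \eqref{eq:qhat-difference}, together with the Hölder-$\tfrac12$ estimate $|f(u+\ii v;z_0)^{\pm2}-1|\le K|z-z_0|^{1/2}$ from Lemma~\ref{lem:f} and Hölder-$\tfrac12$ continuity of $(1-|r(u)|^2)^{-1}$ for $|r|\le\rho$), while $\dbar$ hitting $r(u)$ itself produces the term $\tfrac12 r'(u)$ times bounded factors.

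Next I would estimate the operator. Combining the above,
\begin{equation}
\|(\mathcal{C}_\mathbf{W}\mathbf{F})(u,v)\|\le\frac{\|\mathbf{F}\|_{L^\infty}}{\pi}\iint_{\mathbb{R}^2}\frac{\|\mathbf{W}(u',v')\|}{\left|(u-u')+\ii(v-v')\right|}\,\dd A(u',v'),
\end{equation}
so it suffices to prove that the last integral is bounded by $C(\rho,\|r'\|_{L^2})\,t^{-1/4}$ uniformly in $(u,v)\in\mathbb{R}^2$. Using the bounds of the first paragraph, on $\Omega_1\cup\Omega_3\cup\Omega_4\cup\Omega_6$ one has $\|\mathbf{W}\|\le C_1|r'(u')|\ee^{-8t|(u'-z_0)v'|}+C_2\,[(u'-z_0)^2+v'^2]^{-1/4}\ee^{-8t|(u'-z_0)v'|}$, and I would treat the two summands by different arguments. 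The second lies in $L^p(\mathbb{R}^2)$ for every $p<4$, with $L^p$-norm $\mathcal{O}(t^{1/4-1/p})$ (rescale $u'-z_0=t^{-1/2}R\cos\phi$, $v'=t^{-1/2}R\sin\phi$); fixing $p\in(2,4)$ and $q=p/(p-1)\in(\tfrac43,2)$, Hölder's inequality against $|(u-u')+\ii(v-v')|^{-1}\in L^q_{\mathrm{loc}}$ handles the region $|z-z'|\le1$, while on $|z-z'|>1$ one bounds $|z-z'|^{-1}\le1$ and falls back on $\iint_{\mathbb{R}^2}\|\mathbf{W}\|\,\dd A$. For the first summand — where only $r'\in L^2(\mathbb{R})$ is available, so $|z-z'|^{-1}$ cannot be paired with it by Hölder at the endpoint — I would integrate first in $v'$ over $(0,|u'-z_0|)$, using Hölder there to extract $(t|u'-z_0|)^{-1/4}$ from $\int_0^{|u'-z_0|}\ee^{-8t|u'-z_0|v'}\,\dd v'$ at the cost of a harmless negative power of $|u-u'|$, and then apply Cauchy--Schwarz in $u'$ against $\|r'\|_{L^2}$; a short case split in $|u'-z_0|$ (bounded vs.\ large, the large range supplying an extra power that restores integrability in $u'$) keeps the remaining one-dimensional integral convergent and bounded uniformly in $(u,v)$. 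Granting these estimates, $\|\mathcal{C}_\mathbf{W}\|_{L^\infty\to L^\infty}\le\tfrac12$ for $t$ large, so $\mathbb{I}-\mathcal{C}_\mathbf{W}$ is invertible on $L^\infty(\mathbb{R}^2)$ by a Neumann series; this produces the unique solution $\mathbf{E}=(\mathbb{I}-\mathcal{C}_\mathbf{W})^{-1}\mathbb{I}$ with $\|\mathbf{E}-\mathbb{I}\|_{L^\infty}=\mathcal{O}(t^{-1/4})$ and in particular $\|\mathbf{E}\|_{L^\infty}=\mathcal{O}(1)$.

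Finally I would extract $\mathbf{E}_1$. Multiplying the integral equation by $u+\ii v$ and using $z/(z-z')=1+z'/(z-z')$ shows
\begin{equation}
(u+\ii v)\left[\mathbf{E}(u,v)-\mathbb{I}\right]=\frac{1}{\pi}\iint_{\mathbb{R}^2}\mathbf{E}(u',v')\mathbf{W}(u',v')\,\dd A+\frac{1}{\pi}\iint_{\mathbb{R}^2}\frac{(u'+\ii v')\mathbf{E}(u',v')\mathbf{W}(u',v')}{(u-u')+\ii(v-v')}\,\dd A,
\end{equation}
and as $(u,v)\to\infty$ with $u=0$ the second integral tends to $0$ by dominated convergence: its integrand tends to $0$ pointwise and, because $\mathbf{W}$ vanishes on the sectors $\Omega_2,\Omega_5$ surrounding the imaginary direction, the kernel $z'/(z-z')$ is uniformly bounded on the support of $\mathbf{W}$ for $\re z=0$, while $\mathbf{W}\in L^1(\mathbb{R}^2)$ is immediate from the pointwise bounds. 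Hence the limit \eqref{eq:E1-define} exists, $\mathbf{E}_1(x,t)=\pi^{-1}\iint_{\mathbb{R}^2}\mathbf{E}\mathbf{W}\,\dd A$, and
\begin{equation}
\|\mathbf{E}_1(x,t)\|\le\frac{\|\mathbf{E}\|_{L^\infty}}{\pi}\iint_{\mathbb{R}^2}\|\mathbf{W}(u',v')\|\,\dd A(u',v').
\end{equation}
By the two pointwise observations, $\iint_{\mathbb{R}^2}\|\mathbf{W}\|\,\dd A$ is bounded by a constant times a sum over $j\in\{1,3,4,6\}$ of $\iint_{\Omega_j}|r'(u')|\ee^{-8t|(u'-z_0)v'|}\,\dd A$ and $\iint_{\Omega_j}[(u'-z_0)^2+v'^2]^{-1/4}\ee^{-8t|(u'-z_0)v'|}\,\dd A$, which are \emph{precisely} the integrals $I^{\pm}$ and $J^{\pm}$ of Section~\ref{sec:linear}: the substitution $w=t^{1/2}(u'-z_0)$ with Cauchy--Schwarz gives $\le C\|r'\|_{L^2(\mathbb{R})}t^{-3/4}$ for the first and rescaled polar coordinates give $\le Ct^{-3/4}$ for the second, with constants independent of $z_0\in\mathbb{R}$. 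This yields $\sup_{x\in\mathbb{R}}\|\mathbf{E}_1(x,t)\|=\mathcal{O}(t^{-3/4})$, which is \eqref{eq:E1-decay}. The main obstacle is the operator estimate of the second paragraph: the $\iint\|\mathbf{W}\|\,\dd A$ bound that controls $\mathbf{E}_1$ is essentially the linear calculation of Section~\ref{sec:linear}, but absorbing the additional kernel factor $|z-z'|^{-1}$ forces the Hölder exponents into the narrow window $(2,4)$, requires the $|r'|\in L^2$ piece and the $[(u'-z_0)^2+v'^2]^{-1/4}$ piece of $\mathbf{W}$ to be handled by genuinely different arguments, and demands that every constant be checked uniform in $z_0=-x/(4t)$.
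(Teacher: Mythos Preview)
Your overall architecture is exactly the paper's: convert the $\dbar$-problem to the solid Cauchy integral equation, bound the operator norm on $L^\infty$ to get a contraction, then read off $\mathbf{E}_1$ as $\pi^{-1}\iint\mathbf{E}\mathbf{W}\,\dd A$ and control it by $\|\mathbf{E}\|_{L^\infty}\|\mathbf{W}\|_{L^1}$, with $\|\mathbf{W}\|_{L^1}=\mathcal{O}(t^{-3/4})$ coming from precisely the integrals $I^\pm,J^\pm$ of Section~\ref{sec:linear}. Your $\dbar E_j$ estimate, your dominated-convergence argument for the vanishing of the second term in the $\mathbf{E}_1$ computation, and your final bound are all correct and essentially identical to the paper's.

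The one substantive difference is in the operator-norm step, and here your treatment of the $|r'|$-piece has a gap. You propose to integrate in $v'$ first and extract $(t|u'-z_0|)^{-1/4}$ by H\"older in $v'$ (exponents $4$ and $4/3$), which gives a bound $C(t(u'-z_0))^{-1/4}|u-u'|^{-1/4}$ on the inner integral; Cauchy--Schwarz in $u'$ against $\|r'\|_{L^2}$ then leaves $\int_{z_0}^\infty (u'-z_0)^{-1/2}|u-u'|^{-1/2}\,\dd u'$, which diverges at infinity (integrand $\sim (u')^{-1}$). Your proposed case split does not save this: on the complementary ``bounded $|u'-z_0|$'' range the integral $\int_{z_0}^{z_0+1}(u'-z_0)^{-1/2}|u-u'|^{-1/2}\,\dd u'$ behaves like $\ln(1/|u-z_0|)$ as $u\to z_0$, so the resulting bound is not uniform in $(u,v)$ and you do not control the $L^\infty\to L^\infty$ norm.

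The paper's remedy is simply to reverse the order of iterated integration: do the inner integral in $U$, not $V$. Cauchy--Schwarz in $U$ against $|r'|\in L^2$ cleanly produces $\|r'\|_{L^2}\sqrt{\pi}\,|V-v|^{-1/2}$ with no competing singularity at $U=z_0$, and the remaining one-dimensional $V$-integral $\int_\mathbb{R} \ee^{-8tV^2}|V-v|^{-1/2}\,\dd V$ is then split into $(-\infty,0)$, $(0,v)$, $(v,\infty)$ and bounded by $Ct^{-1/4}$ uniformly in $v$. The $[(U-z_0)^2+V^2]^{-1/4}$ piece is handled the same way, with H\"older (conjugate exponents $1<q<2<p<\infty$) replacing Cauchy--Schwarz on the inner $U$-integral. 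Your alternative 2D-H\"older argument for that second piece does work, though it yields only $\mathcal{O}(t^{1/4-1/p})$ for $p\in(2,4)$ rather than the paper's $\mathcal{O}(t^{-1/4})$; since any decay suffices for a contraction at large $t$, that weakening is harmless for the Proposition.
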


\begin{proof}
To show that $\dbar$-Problem~\ref{dbp:E} has a unique solution for $t>0$ sufficiently large, and simultaneously obtain estimates for the solution $\mathbf{E}(u,v;x,t)$, we formulate a weakly-singular integral equation whose solution is that of $\dbar$-Problem~\ref{dbp:E}:
\begin{equation}
\mathbf{E}(u,v;x,t)=\mathbb{I} +\mathcal{J}\mathbf{E}(u,v;x,t),\quad\mathcal{J}\mathbf{F}(u,v):=-\frac{1}{\pi}\iint_{\mathbb{R}^2}\frac{\mathbf{F}(U,V)\mathbf{W}(U,V;x,t)}{(U-u)+\ii(V-v)}\,\dd A(U,V),
\label{eq:integral-equation}
\end{equation}
in which the identity matrix $\mathbb{I}$ is viewed as a constant function on $\mathbb{R}^2$.  Indeed,
this is a consequence of the distributional identity $\dbar z^{-1}=-\pi\delta$ where $\delta$ denotes the Dirac mass at the origin.  
We will solve the integral equation \eqref{eq:integral-equation} in the space $L^\infty(\mathbb{R}^2)$, by computing the corresponding operator norm\footnote{All $L^p$ norms of matrix-valued functions in this section depend on the choice of matrix norm, which we always take to be induced by a norm on $\mathbb{C}^2$.} of $\mathcal{J}:L^\infty(\mathbb{R}^2)\to L^\infty(\mathbb{R}^2)$ and showing that for large $t>0$ it is less than $1$.  Thus, we begin with the elementary estimate
\begin{equation}
\|\mathcal{J}\mathbf{F}(u,v)\|\le \frac{1}{\pi}\|\mathbf{F}\|_{L^\infty(\mathbb{R}^2)}\iint_{\mathbb{R}^2}
\frac{\|\mathbf{W}(U,V;x,t)\|\,\dd A(U,V)}{\sqrt{(U-u)^2+(V-v)^2}}.
\label{eq:JF-absolute-value}
\end{equation}
Using the uniform boundedness of $\mathbf{P}(\zeta;m)$ and its inverse with respect to $\zeta$:  there exists $C>0$ such that $\|\mathbf{P}(\zeta;m)\|\le C$ and $\|\mathbf{P}(\zeta;m)^{-1}\|\le C$ for all $\zeta\in\mathbb{C}\setminus\Sigma_\mathbf{P}$ and all $m\in [0,\rho]$ with $\rho<1$, the assumption $|r(z)|\le\rho<1$ gives that 
\begin{equation}
\|\mathbf{W}(u,v;x,t)\|\le C^2\begin{cases}\ee^{-8t(u-z_0)v}|\dbar E_1(u,v)|,&\quad z=u+\ii v\in\Omega_1\\
\ee^{8t(u-z_0)v}|\dbar E_3(u,v)|,&\quad z=u+\ii v\in\Omega_3\\
\ee^{-8t(u-z_0)v}|\dbar E_4(u,v)|,&\quad z=u+\ii v\in\Omega_4\\
\ee^{8t(u-z_0)v}|\dbar E_6(u,v)|,&\quad z=u+\ii v\in\Omega_6,
\end{cases}
\label{eq:W-estimate}
\end{equation}
and of course $\mathbf{W}(u,v;x,t)\equiv 0$ on $\Omega_2\cup\Omega_5$.  By direct computation using \eqref{eq:dbar-extensions} along with the analyticity of $f(z;z_0)^{\pm 2}$ provided by Lemma~\ref{lem:f} and straightforward estimates of $\cos(2\arg(u+\ii v-z_0))$ and its $\dbar$-derivative as in Section~\ref{sec:linear}, we have the following analogues of \eqref{eq:dbarE-1}:
\begin{equation}
|\dbar E_1(u,v)|
\le \frac{1}{2}|f(u+\ii v;z_0)^{-2}||r'(u)| + \frac{|f(u+\ii v;z_0)^{-2}r(u)-r(z_0)|}{\sqrt{(u-z_0)^2+v^2}},\quad z=u+\ii v\in\Omega_1,
\end{equation}
\begin{multline}
|\dbar E_3(u,v)|\le\frac{1}{2}|f(u+\ii v;z_0)^2|\left|\frac{\dd}{\dd u}\frac{\overline{r(u)}}{1-|r(u)|^2} \right|\\
{}+\left|\frac{f(u+\ii v;z_0)^2\overline{r(u)}}{1-|r(u)|^2}-\frac{\overline{r(z_0)}}{1-|r(z_0)|^2}\right|\frac{1}{\sqrt{(u-z_0)^2+v^2}},\quad z=u+\ii v\in\Omega_3,
\end{multline}
\begin{multline}
|\dbar E_4(u,v)|\le\frac{1}{2}|f(u+\ii v;z_0)^{-2}|\left|\frac{\dd}{\dd u}\frac{r(u)}{1-|r(u)|^2}\right|
\\{}+\left|\frac{f(u+\ii v;z_0)^{-2}r(u)}{1-|r(u)|^2}-\frac{r(z_0)}{1-|r(z_0)|^2}\right|\frac{1}{\sqrt{(u-z_0)^2+v^2}},\quad z=u+\ii v\in\Omega_4,
\end{multline}
and
\begin{equation}
|\dbar E_6(u,v)|\le\frac{1}{2}|f(u+\ii v;z_0)^2||r'(u)|+\frac{|f(u+\ii v;z_0)^2\overline{r(u)}-\overline{r(z_0)}|}{\sqrt{(u-z_0)^2+v^2}},\quad z=u+\ii v\in\Omega_6.
\end{equation}
Note that
\begin{equation}
\left|\frac{\dd}{\dd u}\frac{\overline{r(u)}}{1-|r(u)|^2}\right|=\left|\frac{\dd}{\dd u}\frac{r(u)}{1-|r(u)|^2}\right|\le\frac{1+\rho^2}{(1-\rho^2)^2}|r'(u)|
\label{eq:derivative}
\end{equation}
holds under the condition $|r(u)|\le\rho<1$.  Also, under the same condition,
\begin{equation}
\begin{split}
|f(u+\ii v;z_0)^{-2}r(u)-r(z_0)|&=|(f(u+\ii v;z_0)^{-2}-1)r(u) + r(u)-r(z_0)|\\ &\le \rho|f(u+\ii v;z_0)^{-2}-1| + |r(u)-r(z_0)|\\
&\le \left(K\rho+\|r'\|_{L^2(\mathbb{R})}\right)[(u-z_0)^2+v^2]^{1/4}, 
\end{split}
\end{equation}
where we used Lemma~\ref{lem:f} and \eqref{eq:qhat-difference}, and $K>0$ depends on $\rho$ but not on $z_0$.  Exactly the same estimate holds for $|f(u+\ii v;z_0)^2\overline{r(u)}-\overline{r(z_0)}|$.
In the same way, but also using \eqref{eq:derivative}, 
\begin{equation}
\begin{split}
\left|\frac{f(u+\ii v;z_0)^2\overline{r(u)}}{1-|r(u)|^2}-\frac{\overline{r(z_0)}}{1-|r(z_0)|^2}\right|&\le
\left(\frac{K\rho}{1-\rho^2}+\frac{1+\rho^2}{(1-\rho^2)^2}\|r'\|_{L^2(\mathbb{R})}\right)[(u-z_0)^2+v^2]^{1/4}\\
\left|\frac{f(u+\ii v;z_0)^{-2}r(u)}{1-|r(u)|^2}-\frac{r(z_0)}{1-|r(z_0)|^2}\right|&\le
\left(\frac{K\rho}{1-\rho^2}+\frac{1+\rho^2}{(1-\rho^2)^2}\|r'\|_{L^2(\mathbb{R})}\right)[(u-z_0)^2+v^2]^{1/4}.
\end{split}
\end{equation}
Therefore again using Lemma~\ref{lem:f}, we see that there are constants $L$ and $M$ depending only on the upper bound $\rho<1$ for $\|r\|_{L^\infty(\mathbb{R})}$, on $\|r\|_{L^2(\mathbb{R})}$, and on $\|r'\|_{L^2(\mathbb{R})}$ such that
\begin{equation}
|\partial E_j(u,v)|\le L|r'(u)| +\frac{M}{[(u-z_0)^2+v^2]^{1/4}},\quad z=u+\ii v\in\Omega_j,\quad j=1,3,4,6.
\label{eq:dbarE-nonlinear}
\end{equation}
Note that \eqref{eq:dbarE-nonlinear} is the nonlinear analogue of the estimate \eqref{eq:dbarE-2}.

Combining \eqref{eq:dbarE-nonlinear} with \eqref{eq:JF-absolute-value}--\eqref{eq:W-estimate} shows that
for some constant $D$ independent of $(x,t)\in\mathbb{R}^2$, 
\begin{equation}
\|\mathcal{J}\mathbf{F}(u,v;x,t)\|\le D\left[I^{[1,4]}(u,v;x,t) + J^{[1,4]}(u,v;x,t) + I^{[3,6]}(u,v;x,t) +J^{[3,6]}(u,v;x,t)\right] \|\mathbf{F}\|_{L^\infty(\mathbb{R}^2)},
\label{eq:JF-pointwise}
\end{equation}
where the four terms are analogues in the nonlinear case of the double integrals defined in \eqref{eq:linear-integrals} for the linear case:
\begin{equation}
\begin{split}
I^{[1,4]}(u,v;x,t)&:=\iint_{\Omega_1\cup\Omega_4}\frac{|r'(U)|\ee^{-8t(U-z_0)V}\,\dd A(U,V)}{\sqrt{(U-u)^2+(V-v)^2}},\\
I^{[3,6]}(u,v;x,t)&:=\iint_{\Omega_3\cup\Omega_6}\frac{|r'(U)|\ee^{8t(U-z_0)V}\,\dd A(U,V)}{\sqrt{(U-u)^2+(V-v)^2}},\\
J^{[1,4]}(u,v;x,t)&:=\iint_{\Omega_1\cup\Omega_4}\frac{\ee^{-8t(U-z_0)V}\,\dd A(U,V)}{[(U-z_0)^2+V^{2}]^{1/4}\sqrt{(U-u)^2+(V-v)^2}},\quad\text{and}\\
J^{[3,6]}(u,v;x,t)&:=\iint_{\Omega_3\cup\Omega_6}\frac{\ee^{8t(U-z_0)V}\,\dd A(U,V)}{[(U-z_0)^2+V^{2}]^{1/4}\sqrt{(U-u)^2+(V-v)^2}}.
\end{split}
\label{eq:nonlinear-integrals}
\end{equation}
Estimation of the integrals $I^{[1,4]}(u,v;x,t)$ and $J^{[1,4]}(u,v;x,t)$ requires nearly identical steps as estimation of $I^{[3,6]}(u,v;x,t)$ and $J^{[3,6]}(u,v;x,t)$ (just note that the sign of the exponent always corresponds to decay in the sectors of integration).  So for brevity we just deal with $I^{[3,6]}(u,v;x,t)$ and $J^{[3,6]}(u,v;x,t)$.

To estimate $I^{[3,6]}(u,v;x,t)$, by iterated integration we have
\begin{equation}
\begin{split}
I^{[3,6]}(u,v;x,t)&=\left[\int_0^{+\infty}\,\dd V\int_{-\infty}^{z_0-V}\,\dd U +\int_{-\infty}^0\,\dd V\int_{z_0-V}^{+\infty}\,\dd U\right]\frac{|r'(U)|\ee^{8t(U-z_0)V}}{\sqrt{(U-u)^2+(V-v)^2}}\\
&\le\left[\int_{0}^{+\infty}\,\dd V\int_{-\infty}^{z_0-V}\,\dd U + \int_{-\infty}^0\,\dd V\int_{z_0-V}^{+\infty}\,\dd U\right]\frac{|r'(U)|\ee^{-8tV^2}}{\sqrt{(U-u)^2+(V-v)^2}}.
\end{split}
\label{eq:I36-iterated-integrals}
\end{equation}
The inner integrals can be estimated by Cauchy-Schwarz, using the fact that $r'\in L^2(\mathbb{R})$:
\begin{equation}
\begin{split}
\pm\int_{\mp\infty}^{z_0-V}\frac{|r'(U)|\,\dd U}{\sqrt{(U-u)^2+(V-v)^2}}&\le\int_{\mathbb{R}}\frac{|r'(U)|\,\dd U}{\sqrt{(U-u)^2+(V-v)^2}}\\
&\le\|r'\|_{L^2(\mathbb{R})}\sqrt{\int_\mathbb{R}\frac{\dd U}{(U-u)^2+(V-v)^2}} = \frac{\|r'\|_{L^2(\mathbb{R})}\sqrt{\pi}}{\sqrt{|V-v|}}.
\end{split}
\end{equation}
Thus, 
\begin{equation}
I^{[3,6]}(u,v;x,t)\le\|r'\|_{L^2(\mathbb{R})}\sqrt{\pi}\int_\mathbb{R}\frac{\ee^{-8tV^2}\,\dd V}{\sqrt{|V-v|}}.
\label{eq:I36-1}
\end{equation}
Without loss of generality, suppose that $v> 0$.  Then
\begin{equation}
\int_\mathbb{R}\frac{\ee^{-8tV^2}\,\dd V}{\sqrt{|V-v|}} = \int_{-\infty}^0\frac{\ee^{-8tV^2}\,\dd V}{\sqrt{v-V}} + \int_0^v\frac{\ee^{-8tV^2}\,\dd V}{\sqrt{v-V}} +\int_v^{+\infty}\frac{\ee^{-8tV^2}\,\dd V}{\sqrt{V-v}}.
\label{eq:V-integral-sum}
\end{equation}
Using monotonicity of $\sqrt{v-V}$ on $V<0$ and the rescaling $V=t^{-1/2}w$, we get for the first term:
\begin{equation}
\int_{-\infty}^0\frac{\ee^{-8tV^2}\,\dd V}{\sqrt{v-V}}\le\int_{-\infty}^0\frac{\ee^{-8tV^2}\,\dd V}{\sqrt{-V}} = t^{-1/4}\int_{-\infty}^0\frac{\ee^{-8w^2}\,\dd w}{\sqrt{-w}} = \mathcal{O}(t^{-1/4}).
\label{eq:V-integral-1}
\end{equation}
For the second term, we use the inequality $\ee^{-b}\le Cb^{-1/4}$ for $b>0$ and the rescaling $V=vw$ to get
\begin{equation}
\int_0^v\frac{\ee^{-8tV^2}\,\dd V}{\sqrt{v-V}}\le C(8t)^{-1/4}\int_0^v\frac{\dd V}{\sqrt{V(v-V)}}
= C(8t)^{-1/4}\int_0^1\frac{\dd w}{\sqrt{w(1-w)}}=\mathcal{O}(t^{-1/4}).
\label{eq:V-integral-3}
\end{equation}
Using monotonicity of $\ee^{-8tV^2}$ on $V>v$ and the change of variable $V-v=t^{-1/2}w$ we get for the third term:
\begin{equation}
\int_v^{+\infty}\frac{\ee^{-8tV^2}\,\dd V}{\sqrt{V-v}}\le \int_v^{+\infty}\frac{\ee^{-8t(V-v)^2}\,\dd V}{\sqrt{V-v}} = t^{-1/4}\int_0^{+\infty}\frac{\ee^{-8w^2}\,\dd w}{\sqrt{w}}=\mathcal{O}(t^{-1/4}).
\label{eq:V-integral-2}
\end{equation}
The upper bounds in \eqref{eq:V-integral-1}--\eqref{eq:V-integral-3} are all independent of $v$ (and $u$), so combining them with \eqref{eq:I36-1}--\eqref{eq:V-integral-sum} gives
\begin{equation}
\sup_{(u,v)\in\mathbb{R}^2} I^{[3,6]}(u,v;x,t)\le C\|r'\|_{L^2(\mathbb{R})} t^{-1/4},
\label{eq:I36-2}
\end{equation}
where $C$ denotes an absolute constant.

To estimate $J^{[3,6]}(u,v;x,t)$ we again introduce iterated integrals in the same way as in \eqref{eq:I36-iterated-integrals} to obtain the inequality
\begin{equation}
J^{[3,6]}(u,v;x,t)\le\left[\int_0^{+\infty}\,\dd V\int_{-\infty}^{z_0-V}\,\dd U +
\int_{-\infty}^0\,\dd V\int_{z_0-V}^{+\infty}\,\dd U\right]\frac{\ee^{-8tV^2}}{[(U-z_0)^2+V^2]^{1/4}
\sqrt{(U-u)^2+(V-v)^2}}.
\end{equation}
Now, to estimate the inner $U$-integrals we will use H\"older's inequality with conjugate exponents $p>2$ and $q<2$.  Thus,
\begin{multline}
\pm\int_{\mp\infty}^{z_0-V}\frac{\dd U}{[(U-z_0)^2+V^2]^{1/4}\sqrt{(U-u)^2+(V-v)^2}}\\
\begin{aligned}
&\le\left(\pm\int_{\mp\infty}^{z_0-V}\frac{\dd U}{[(U-z_0)^2+V^2]^{p/4}}\right)^{1/p}
\left(\pm\int_{\mp\infty}^{z_0-V}\frac{\dd U}{[(U-u)^2+(V-v)^2]^{q/2}}\right)^{1/q}\\
&\le\left(\int_\mathbb{R}\frac{\dd U}{[(U-z_0)^2+V^2]^{p/4}}\right)^{1/p}
\left(\int_\mathbb{R}\frac{\dd U}{[(U-u)^2+(V-v)^2]^{q/2}}\right)^{1/q}.
\end{aligned}
\end{multline}
Now, by the change of variable $U-z_0=|V|w$,
\begin{equation}
\left(\int_\mathbb{R}\frac{\dd U}{[(U-z_0)^2+V^2]^{p/4}}\right)^{1/p}=
|V|^{1/p-1/2}\left(\int_\mathbb{R}\frac{\dd w}{[w^2+1]^{p/4}}\right)^{1/p},
\end{equation}
where the integral on the right-hand side is convergent as long as $p>2$.  Similarly, by the change of variable $U-u=|V-v|w$,
\begin{equation}
\left(\int_\mathbb{R}\frac{\dd U}{[(U-u)^2+(V-v)^2]^{q/2}}\right)^{1/q} = 
|V-v|^{1/q-1}\left(\int_\mathbb{R}\frac{\dd w}{[w^2+1]^{q/2}}\right)^{1/q},
\end{equation}
where the integral on the right-hand side is convergent as long as $q>1$.  Hence for any conjugate exponents $1<q<2<p<\infty$ with $p^{-1}+q^{-1}=1$, we have for some constant $C=C(p,q)$,
\begin{equation}
J^{[3,6]}(u,v;x,t)\le C\int_\mathbb{R}\ee^{-8tV^2}|V|^{1/p-1/2}|V-v|^{1/q-1}\,\dd V.
\label{eq:J36-1}
\end{equation}
As before, assume without loss of generality that $v> 0$.  Then
\begin{multline}
\int_\mathbb{R}\ee^{-8tV^2}|V|^{1/p-1/2}|V-v|^{1/q-1}\,\dd V=
\int_{-\infty}^0\ee^{-8tV^2}(-V)^{1/p-1/2}(v-V)^{1/q-1}\,\dd V \\{}+
\int_0^v\ee^{-8tV^2}V^{1/p-1/2}(v-V)^{1/q-1}\,\dd V +\int_v^{+\infty}\ee^{-8tV^2}V^{1/p-1/2}(V-v)^{1/q-1}\,\dd V.
\label{eq:J-integral-sum}
\end{multline}
Using $q>1$ and monotonicity of $(v-V)^{1/q-1}$ on $V<0$ along with $1/p+1/q=1$ and the rescaling $V=t^{-1/2}w$ gives
for the first integral
\begin{equation}
\begin{split}
\int_{-\infty}^0\ee^{-8tV^2}(-V)^{1/p-1/2}(v-V)^{1/q-1}\,\dd V&\le
\int_{-\infty}^0\ee^{-8tV^2}(-V)^{1/p-1/2 +1/q-1}\,\dd V\\ &=
\int_{-\infty}^0\ee^{-8tV^2}(-V)^{-1/2}\,\dd V \\ &= t^{-1/4}\int_{-\infty}^0\ee^{-8w^2}(-w)^{-1/2}\,\dd w = \mathcal{O}(t^{-1/4}).
\end{split}
\label{eq:J-integral-1}
\end{equation}
For the second integral, we again recall $\ee^{-b}\le Cb^{-1/4}$ for $b>0$ and rescale by $V=vw$ to get
\begin{equation}
\begin{split}
\int_0^v\ee^{-8tV^2}V^{1/p-1/2}(v-V)^{1/q-1}\,\dd V&\le C(8t)^{-1/4}\int_0^v V^{1/p-1}(v-V)^{1/q-1}\,\dd V\\
&= C(8t)^{-1/4}\int_0^1 w^{1/p-1}(1-w)^{1/q-1}\,\dd w =\mathcal{O}(t^{-1/4}),
\end{split}
\label{eq:J-integral-2}
\end{equation}
using also $q,p<\infty$.  Finally, for the third integral, we use monotonicity of $\ee^{-8tV^2}$ and $V^{1/p-1/2}$ (for $p>2$) on $V>v$ and make the substitution $V-v=t^{-1/2}w$ to get
\begin{equation}
\begin{split}
\int_v^{+\infty}\ee^{-8tV^2}V^{1/p-1/2}(V-v)^{1/q-1}\,\dd V &\le
\int_v^{+\infty}\ee^{-8t(V-v)^2}(V-v)^{1/p-1/2}(V-v)^{1/q-1}\,\dd V\\ &=
\int_v^{+\infty}\ee^{-8t(V-v)^2}(V-v)^{-1/2}\,\dd V\\ &=
t^{-1/4}\int_0^{+\infty}\ee^{-8w^2}w^{-1/2}\,\dd w = \mathcal{O}(t^{-1/4}).
\end{split}
\label{eq:J-integral-3}
\end{equation}
Since the upper bounds in \eqref{eq:J-integral-1}--\eqref{eq:J-integral-3} are all independent of $(u,v)\in\mathbb{R}^2$, combining them with \eqref{eq:J36-1}--\eqref{eq:J-integral-sum} gives
\begin{equation}
\sup_{(u,v)\in\mathbb{R}^2}J^{[3,6]}(u,v;x,t)\le C t^{-1/4},
\label{eq:J36-2}
\end{equation}
where $C$ denotes an absolute constant.

Returning to \eqref{eq:JF-pointwise} and taking a supremum over $(u,v)\in\mathbb{R}^2$, we see that 
\begin{equation}
\|\mathcal{J}\mathbf{F}\|_{L^\infty(\mathbb{R}^2)}\le Dt^{-1/4}\|\mathbf{F}\|_{L^\infty(\mathbb{R}^2)},\quad\text{\emph{i.e.,}}\quad \|\mathcal{J}\|_{L^\infty(\mathbb{R}^2)\circlearrowleft}\le Dt^{-1/4}
\label{eq:J-estimate}
\end{equation}
holds where $D$ is a constant depending only on the upper bound $\rho<1$ for $\|r\|_{L^\infty(\mathbb{R})}$, on $\|r\|_{L^2(\mathbb{R})}$, and on $\|r'\|_{L^2(\mathbb{R})}$, and where $\|\mathcal{J}\|_{L^\infty(\mathbb{R}^2)\circlearrowleft}$ denotes the norm of the weakly-singular integral operator $\mathcal{J}$ acting in $L^\infty(\mathbb{R}^2)$.  It is a consequence of \eqref{eq:J-estimate}  that the integral equation \eqref{eq:integral-equation} is uniquely solvable in $L^\infty(\mathbb{R}^2)$ by convergent Neumann series for sufficiently large $t>0$:
\begin{equation}
\mathbf{E}(u,v;x,t)=(\mathcal{I}-\mathcal{J})^{-1}\mathbb{I}=\mathbb{I}+\mathcal{J}\mathbb{I} + \mathcal{J}^2\mathbb{I}+\mathcal{J}^3\mathbb{I}+\cdots,\quad t>D^{-4},
\label{eq:Neumann}
\end{equation}
where $\mathcal{I}$ denotes the identity operator and $\mathbb{I}$ the constant function on $\mathbb{R}^2$, and that the solution satisfies 
\begin{equation}
\|\mathbf{E}-\mathbb{I}\|_{L^\infty(\mathbb{R}^2)}\le \frac{Dt^{-1/4}}{1-Dt^{-1/4}}=\mathcal{O}(t^{-1/4}),\quad t\to +\infty,
\label{eq:EminusI}
\end{equation}
an estimate that is uniform with respect to $x\in\mathbb{R}$.  This proves the first assertion in Proposition~\ref{prop:dbar}.

To prove the existence of the limit $\mathbf{E}_1(x,t)$ in \eqref{eq:E1-define}, note that
from the integral equation \eqref{eq:integral-equation} we have
\begin{equation}
\begin{split}
(u+\ii v)\left[\mathbf{E}(u,v;x,t)-\mathbb{I}\right]&=\frac{1}{\pi}\iint_{\mathbb{R}^2}\mathbf{E}(U,V;x,t)\mathbf{W}(U,V;x,t)\,\dd A(U,V)\\
&\quad\quad -\frac{1}{\pi}\iint_{\mathbb{R}^2}\frac{U+\ii V}{(U-u)+\ii (V-v)}\mathbf{E}(U,V;x,t)\mathbf{W}(U,V;x,t)\,\dd A(U,V).
\end{split}
\label{eq:two-terms}
\end{equation}
The second term satisfies 
\begin{multline}
\left\|\iint_{\mathbb{R}^2}\frac{U+\ii V}{(U-u)+\ii (V-v)}\mathbf{E}(U,V;x,t)\mathbf{W}(U,V;x,t)\,\dd A(U,V)\right\|\\
\le \|\mathbf{E}\|_{L^\infty(\mathbb{R}^2)}\iint_{\mathbb{R}^2}\sqrt{\frac{U^2+V^2}{(U-u)^2+(V-v)^2}}\|\mathbf{W}(U,V;x,t)\|\,\dd A(U,V).
\label{eq:Dom-Conv-1}
\end{multline}
Now, following \cite{LiuPS18}, let us examine the resulting double integral for $u=0$, \emph{i.e.}, for $z=u+\ii v$ restricted to the imaginary axis.  Some simple trigonometry shows that 
\begin{equation}
\sup_{(U,V)\in\mathrm{supp}(\mathbf{W}(\cdot,\cdot;x,t))}\sqrt{\frac{U^2+V^2}{U^2 + (V-v)^2}} = 1+\sqrt{2}\frac{|v|}{|v|-|z_0|},\quad |v|>|z_0|.
\label{eq:kernel-Linfty}
\end{equation}
Therefore, if $u=0$, the double integral on the right-hand side of \eqref{eq:Dom-Conv-1} will tend to zero as $|v|\to\infty$ by the Lebesgue dominated convergence theorem provided that $\mathbf{W}(\cdot,\cdot;x,t)\in L^1(\mathbb{R}^2)$.  Using \eqref{eq:W-estimate} and \eqref{eq:dbarE-nonlinear}, we have
\begin{equation}
\iint_{\mathbb{R}^2}\|\mathbf{W}(U,V;x,t)\|\,\dd A(U,V)\le D\left[\widetilde{I}^{[1,4]}(x,t) +
\widetilde{J}^{[1,4]}(x,t) + \widetilde{I}^{[3,6]}(x,t) +\widetilde{J}^{[3,6]}(x,t)\right],
\end{equation}
where (compare with \eqref{eq:nonlinear-integrals}, or better yet, \eqref{eq:linear-integrals})
\begin{equation}
\begin{split}
\widetilde{I}^{[1,4]}(x,t)&:=\iint_{\Omega_1\cup\Omega_4}|r'(U)|\ee^{-8t(U-z_0)V}\,\dd A(U,V),\\
\widetilde{I}^{[3,6]}(x,t)&:=\iint_{\Omega_3\cup\Omega_6}|r'(U)|\ee^{8t(U-z_0)V}\,\dd A(U,V),\\
\widetilde{J}^{[1,4]}(x,t)&:=\iint_{\Omega_1\cup\Omega_4}\frac{\ee^{-8t(U-z_0)V}\,\dd A(U,V)}{[(U-z_0)^2+V^2]^{1/4}},\quad\text{and}\\
\widetilde{J}^{[3,6]}(x,t)&:=\iint_{\Omega_3\cup\Omega_6}\frac{\ee^{8t(U-z_0)V}\,\dd A(U,V)}{[(U-z_0)^2+V^2]^{1/4}}.
\end{split}
\end{equation}
Noting the resemblance with the double integrals \eqref{eq:linear-integrals} analyzed in Section~\ref{sec:linear}, we can immediately obtain the estimate
\begin{equation}
\iint_{\mathbb{R}^2}\|\mathbf{W}(U,V;x,t)\|\,\dd A(U,V)\le Ct^{-3/4}<\infty
\label{eq:W-L1}
\end{equation}
for some constant $C$ independent of $x$. Therefore, the second term on the right-hand side of \eqref{eq:two-terms} tends to zero as $v\to\infty$ if $u=0$ (the limit is not uniform with respect to $x$ since $v$ is compared with $z_0$ in \eqref{eq:kernel-Linfty}).  Comparing with \eqref{eq:E1-define}, we obtain from \eqref{eq:two-terms} the formula
\begin{equation}
\mathbf{E}_1(x,t):=\frac{1}{\pi}\iint_{\mathbb{R}^2}\mathbf{E}(U,V;x,t)\mathbf{W}(U,V;x,t)\,\dd A(U,V),
\label{eq:E1-formula}
\end{equation}
and exactly the same argument shows that $\mathbf{E}_1(x,t)$ is finite and uniformly decaying as $t\to +\infty$:
\begin{equation}
\begin{split}
\|\mathbf{E}_1(x,t)\|\le\frac{1}{\pi}\|\mathbf{E}\|_{L^\infty(\mathbb{R}^2)}\|\mathbf{W}\|_{L^1(\mathbb{R}^2)}&\le \frac{1}{\pi}\left(\|\mathbb{I}\|_{L^\infty(\mathbb{R}^2)}+\|\mathbf{E}-\mathbb{I}\|_{L^\infty(\mathbb{R}^2)}\right)\|\mathbf{W}\|_{L^1(\mathbb{R}^2)}\\
&\le\frac{C}{\pi}\left(1+\frac{Dt^{-1/4}}{1-Dt^{-1/4}}\right)t^{-3/4} = \mathcal{O}(t^{-3/4}),
\end{split}
\end{equation}
where we have used \eqref{eq:EminusI} and \eqref{eq:W-L1} and noted that the constants $C$ and $D$ are independent of $x$.  This proves the second assertion in Proposition~\ref{prop:dbar}.
\end{proof}

\subsection{The solution of the Cauchy problem \eqref{eq:NLSEQ}--\eqref{eq:IC} for $t>0$ large}
\label{sec:q}
Now we complete the proof of Theorem~\ref{mainresult} by combining our previous results.
The matrix function $\mathbf{N}(u+\ii v;x,t)$ agrees with $\mathbf{O}(u,v;x,t)$ for $u=0$ and $|v|$ sufficiently large given $z_0=-x/(4t)$.  Since according to \eqref{eq:E-define}, 
\begin{equation}
\mathbf{O}(u,v;x,t)=(2t^{1/2})^{-\ii\nu(z_0)\sigma_3}\mathbf{E}(u,v;x,t)\mathbf{P}(2t^{1/2}(u+\ii v-z_0);|r(z_0)|),
\end{equation}
we compute the matrix coefficient $\mathbf{N}_1(x,t)$ appearing in \eqref{eq:qNrelation} by taking a limit along the imaginary axis in \eqref{eq:N-normalize}.  Thus, we obtain $\mathbf{N}_1(x,t)=(2t^{1/2})^{-\ii\nu(z_0)\sigma_3}\mathbf{Q}(x,t)(2t^{1/2})^{\ii\nu(z_0)\sigma_3}$, where
\begin{equation}
\begin{split}
\mathbf{Q}(x,t)&=(2t^{1/2})^{\ii\nu(z_0)\sigma_3}\left\{\mathop{\lim_{(u,v)\to\infty}}_{u=0} (u+\ii v)\left[\mathbf{N}(u+\ii v;x,t)(z-z_0)^{-\ii\nu(z_0)\sigma_3}-\mathbb{I}\right]\right\}(2t)^{-\ii\nu(z_0)\sigma_3}\\
&=\mathop{\lim_{(u,v)\to\infty}}_{u=0}(u+\ii v)\left[\mathbf{E}(u,v;x,t)\mathbf{P}(2t^{1/2}(u+\ii v-z_0);|r(z_0)|)(2t^{1/2}(u+\ii v-z_0))^{-\ii\nu(z_0)\sigma_3}-\mathbb{I}\right].
\end{split}
\end{equation}
Using \eqref{eq:P-expansion} and Proposition~\ref{prop:dbar} yields
\begin{equation}
\mathbf{Q}(x,t)=\mathbf{E}_1(x,t) + \frac{1}{2}t^{-1/2}\mathbf{P}_1(|r(z_0)|).
\end{equation}
Therefore, using \eqref{eq:qNrelation} gives the following formula for the solution of the Cauchy problem \eqref{eq:NLSEQ}--\eqref{eq:IC}:
\begin{equation}
\begin{split}
q(x,t)&=2\ii\ee^{-\ii\omega(z_0)}\ee^{-2\ii t\theta(z_0;z_0)}c(z_0)^{-2}(2t^{1/2})^{-2\ii\nu(z_0)}Q_{12}(x,t)\\
&= \ee^{-\ii\omega(z_0)}\ee^{-2\ii t\theta(z_0;z_0)}c(z_0)^{-2}(2t^{1/2})^{-2\ii\nu(z_0)}\left[2\ii E_{1,12}(x,t) + \frac{1}{2}t^{-1/2}2\ii P_{1,12}(|r(z_0)|)\right]\\
&= \ee^{-\ii\omega(z_0)}\ee^{-2\ii t\theta(z_0;z_0)}c(z_0)^{-2}(2t^{1/2})^{-2\ii\nu(z_0)}\left[2\ii E_{1,12}(x,t) + \frac{1}{2}t^{-1/2}\beta(|r(z_0)|)\right],
\end{split}
\label{eq:nl-q-formula}
\end{equation}
where we recall $\omega(z_0)=\arg(r(z_0))$, $\theta(z_0;z_0)=-2z_0^2$, the definition \eqref{eq:nu-define} of $\nu(z_0)$, the definition \eqref{eq:c-define} of $c(z_0)$, and the definitions \eqref{eq:beta-mod-squared} and \eqref{eq:arg-beta} of $|\beta(m=|r(z_0)|)|^2$ and $\arg(\beta(m=|r(z_0)|))$ respectively.  Since the factors to the left of the square brackets have unit modulus, from 
Proposition~\ref{prop:dbar} it follows that $q(x,t)$ has exactly the representation \eqref{eq:AsForm}
in which $|\mathcal{E}(x,t)|=|E_{1,12}(x,t)|=\mathcal{O}(t^{-3/4})$ as $t\to +\infty$, uniformly with respect to $x$.  This completes the proof of Theorem~\ref{mainresult}.

\subsection*{Remark}
The use of truncations of the Neumann series \eqref{eq:Neumann} for $\mathbf{E}(u,v;x,t)$ yields a corresponding asymptotic expansion of $q(x,t)$ as $t\to+\infty$.  In other words, it is straightforward (but tedious) to compute explicit corrections to the leading term in the asymptotic formula \eqref{eq:AsForm} by expanding $\mathcal{E}(x,t)$.  For instance, the formula \eqref{eq:E1-formula} gives
\begin{equation}
\mathbf{E}_1(x,t)=\frac{1}{\pi}\iint_{\mathbb{R}^2}\mathbf{W}(U,V;x,t)\,\dd A(U,V) + 
\frac{1}{\pi}\iint_{\mathbb{R}^2}(\mathbf{E}(U,V;x,t)-\mathbb{I})\mathbf{W}(U,V;x,t)\,\dd A(U,V),
\end{equation}
\emph{i.e.}, an explicit double integral plus a remainder.  Using the estimates \eqref{eq:EminusI} and \eqref{eq:W-L1} we find that the remainder term satsifies
\begin{equation}
\begin{split}
\sup_{x\in\mathbb{R}}\left\|\frac{1}{\pi}\iint_{\mathbb{R}^2}(\mathbf{E}(U,V;x,t)-\mathbb{I})\mathbf{W}(U,V;x,t)\,\dd A\right\| &\le
\frac{1}{\pi}\sup_{x\in\mathbb{R}}\|\mathbf{E}(\cdot,\cdot;x,t)\|_{L^\infty(\mathbb{R}^2)}\|\mathbf{W}(\cdot,\cdot;x,t)\|_{L^1(\mathbb{R})}\\ & = \mathcal{O}(t^{-1/4}t^{-3/4})=\mathcal{O}(t^{-1}),\quad t\to +\infty.
\end{split}
\end{equation}
Using this result in \eqref{eq:nl-q-formula} gives in place of \eqref{eq:AsForm} the corrected asymptotic formula
\begin{equation}
q(x,t)=q^{(0)}(x,t) + q^{(1)}(x,t)  + \mathcal{E}^{(1)}(x,t)
\label{eq:q-expansion}
\end{equation}
where
\begin{equation}
q^{(0)}(x,t):=t^{-1/2}\alpha(z_0)\ee^{\ii x^2/(4t)-\ii\nu(z_0)\ln(8t)} 
\end{equation}
is the leading term in \eqref{eq:AsForm},
\begin{equation}
q^{(1)}(x,t):= \frac{2\ii}{\pi}\ee^{-\ii\omega(z_0)}\ee^{-2\ii t\theta(z_0;z_0)}c(z_0)^{-2}(2t^{1/2})^{-2\ii\nu(z_0)}\iint_{\mathbb{R}^2}W_{12}(U,V;x,t)\,\dd A(U,V)
\label{eq:q1}
\end{equation}
is an explicit correction (see \eqref{eq:W-def}--\eqref{eq:Delta-def}), and where $\mathcal{E}^{(1)}(x,t)$ is error term satisfying $\mathcal{E}^{(1)}(x,t)=\mathcal{O}(t^{-1})$ as $t\to +\infty$ uniformly with respect to $x\in\mathbb{R}$.  Theorem~\ref{mainresult} implies that the correction satisfies $\|q^{(1)}(\cdot,t)\|_{L^\infty(\mathbb{R})}=\mathcal{O}(t^{-3/4})$ as $t\to+\infty$, but the explicit formula \eqref{eq:q1} 
allows for a complete analysis of the correction.  For instance, we are in a position to seek reflection coefficients $r(z)$ in the Sobolev space $H^1(\mathbb{R})$ with $|r(z)|\le\rho<1$ for which the correction saturates the upper bound of $\mathcal{O}(t^{-3/4})$, or to determine under which conditions on $r(z)$ the correction term can be smaller.  Under additional hypotheses the expansion \eqref{eq:q-expansion} can be carried out to higher order, with subsequent corrections involving iterated double integrals of $\mathbf{W}$, which in turn involve $\dbar$-derivatives of the extensions $E_j$, $j=1,3,4,6$, and the parabolic cylinder functions contained in the matrix $\mathbf{P}(\zeta;m)$ solving Riemann-Hilbert Problem~\ref{rhp:PC}.

\end{document}